\theoremstyle{plain}
\newtheorem{thm}{Theorem}
\theoremstyle{definition}
\newtheorem*{conj}{Conjecture}
\newtheorem{rem}{Remark}
\newtheorem{lem}{Lemma}
\newtheorem*{cor}{Corollary}
\newcommand{\comment}[1]{}
\newcommand{\R}{\mathbb{R}}
\newcommand{\N}{\mathbb{N}}
\newcommand{\G}{\mathcal{G}}
\newcommand{\textd}{\textit}
\begin{document}
\vspace*{-0.9cm}

\title{Toroidal embeddings of planar graphs are knotted or linked}

\author{Senja Barthel, Dorothy Buck}
\address{Department of Mathematics, Imperial College London, London, SW7 2AZ, United Kingdom}
\email{s.barthel11@imperial.ac.uk}

\begin{abstract}
We give explicit deformations of embeddings of planar graphs that lie on the standard  torus~$T^2 \subset \R^3$ and that contain neither a nontrivial knot nor a nonsplit link into the plane. It follows that ravels do not embed on the torus. Our results provide general insight into properties of molecules that are synthesized on a torus.
\keywords{topological graphs \and templating on a toroidal substrate \and knots and links}
\end{abstract}
\maketitle
\section{Introduction}
\label{intro}
The interaction between mathematical topology, in particular topological graph theory, and the investigation of chemical structures is a rich area (\cite{SauvageAmabilino}-\cite{Flapan} and the references therein). Topological graph theory studies embeddings of graphs in $3$-space. In most cases, these spatial graphs can be thought of as knots or links with additional edges attached: A \textbf{spatial graph} is the image $\mathcal{G}$ of an embedding $f:G\rightarrow \R^3$, where $G$ is a graph. Two spatial graphs are considered different if it is not possible to transform one into the other without self-intersections during the transformation. (Note that a graph that contains a cycle has many different spatial graph realisations.) The allowed transformations are \textbf{ambient isotopies} such as bending, stretching and shrinking of edges as long as no edge is shrunk to a point.\\
Spatial graphs can model molecules. The typical example are embeddings of molecular graphs. These are spatial graphs whose vertices are placed at the positions of atoms and edges are added between two vertices if a bond is formed  between their corresponding atoms. A different example of spatial graphs describing molecules is the representation of a coordination polymer where its ligands correspond to the edges of the spatial graph and the coordination entities correspond to its vertices. Results about spatial graphs directly translate to information about the configuration of molecules. In particular, if entangled chemical structures like knots, links, braids, and ravels are present, topological graph theory can be an appropriate framework \cite{SauvageAmabilino}-\cite{Flapan}. Knot theoretical methods can predict, or give constraints on, the possible entanglements and related properties like chirality of chemical structures.\\
As molecules with non-standard topological structure often have unusal chemical properties, synthetic organic chemists have designed new structures that include entanglements (e.g.~\cite{Ayme}-\cite{Simmons}). Furthermore, crystal engineers have produced coordination networks that contain knots and links~\cite{CarlucciProserpioReview}. Many $3$-dimensional and several $2$-dimensional entangled structures have been reported by experimentalists~\cite{3Dclassification},~\cite{3Dclassification2}. A concept of topological entanglements called ravels that are not caused by knots or links was introduced to chemistry by Castle~et~al.~\cite{Ravels}. Following, a mathematical description in terms of spatial graphs of one family of ravels was given by Farkas~et.~al.~\cite{Farkas}. A \textbf{ravel} is a nontrivial embedded $\theta_n$-graph that does not contain a nontrivial knot. The $\theta_{n}$-graph consists of two vertices that are joined by $n$ edges; it cannot contain links. A molecular ravel was synthesised by Clegg~et~al.~\cite{Lindoy}. \\
Many molecules have a corresponding \textbf{planar} graph, i.e., are described by a graph for which an embedding on the sphere~$S^2$ (equivalently on the plane~$\R^2$) exists. Such an embedding is a \textbf{trivial embedding} and its image is a \textbf{trivial spatial graph}. Embeddings of molecules on a sphere (respectively trivial spatial graphs) contain no nontrivial knots or nonsplit links. \\
We are interested in molecules that are described by a planar graph and that are flexible enough to be realised in \textit{topologically different forms} in $3$-space, in particular as knotted molecules. Examples of such flexible molecules are carcerands or molecules that are built with DNA strands. The next more topologically complex surface in $\R^3$ after the sphere is the torus, and embeddings on the standard torus can be nontrivially knotted and linked. It is therefore reasonable to investigate how molecules with planar underlying graphs can embed on the torus to analyse the next level of complexity of their realisation. We call spatial graphs (and molecules that are described by them) on the torus that cannot be transformed to lie on the sphere \textbf{toroidal}. Whenever we use the word torus in this paper, we refer to the standard torus.\\
To state the following conjecture that was given by Castle, Evans and Hyde~\cite{Hyde} we make the following definitions:
A \textbf{polyhedral} molecule has an underlying graph that is planar 3-connected and simple. A graph is \textbf{\textit{n}-connected} if at least $n$ vertices and their incident edges have to be removed to disconnect the graph or to reduce it to a single vertex. A graph is \textbf{simple} if it has neither multiple edges between a given pair of vertices nor loops from a vertex to itself.
\begin{conj}[Castle, Evans and Hyde~\cite{Hyde}]
All polyhedral toroidal molecules contain a nontrivial knot or a nonsplit link.
\end{conj}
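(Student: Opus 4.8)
The plan is to establish the deformation statement announced in the abstract, which immediately yields the conjecture: if an abstractly planar graph $G$ is embedded on the standard torus $T^2\subset\R^3$ and the resulting spatial graph $\mathcal G$ contains neither a nontrivial knot nor a nonsplit link, then $\mathcal G$ is ambient isotopic in $\R^3$ to a planar spatial graph. Since every polyhedral graph is in particular abstractly planar, the contrapositive says that a toroidal (i.e.\ non-planar) embedding of such a graph must contain a nontrivial knot or a nonsplit link, which is exactly the statement of the conjecture. First I would reduce to the case that $G$ is connected and that $\mathcal G$ carries at least one \emph{essential} cycle, meaning a cycle whose image is a non-contractible simple closed curve on $T^2$; if every cycle of $\mathcal G$ is contractible on $T^2$ then $\mathcal G$ lies in a disk and is trivially planar. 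Fix one such essential cycle $\gamma$, necessarily an embedded simple closed curve on $T^2$ of some slope $(a,b)$ with $\gcd(|a|,|b|)=1$.

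The core of the argument is to exploit the two hypotheses to normalise the available essential curves. On the standard torus an essential simple closed curve of slope $(p,q)$ is realised as the $(p,q)$ torus knot, which is nontrivial precisely when $|p|\ge 2$ and $|q|\ge 2$; hence the no-knot hypothesis forces $|a|\le 1$ or $|b|\le 1$ for $\gamma$, and likewise for every essential cycle of $\mathcal G$. The no-link hypothesis is the decisive tool: any two disjoint essential curves on $T^2$ are parallel, share a single slope $(p,q)$, and bound a torus link that is split exactly when $p=0$ or $q=0$. I would now split into two cases according to the complementary regions $T^2\setminus\mathcal G$. In the first case some complementary face contains an essential simple closed curve $c$; since $c$ is disjoint from the graph it is disjoint from $\gamma$, so $c$ is parallel to $\gamma$ and the common slope must satisfy $ab=0$, i.e.\ $c$ is a meridian $(0,\pm1)$ or a longitude $(\pm1,0)$.

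In this first case the deformation is explicit and clean. A meridian bounds a compressing disk $D$ in the inner solid torus bounded by $T^2$, and a longitude bounds one in the outer solid torus; in either event $\Int D$ is disjoint from $T^2$ and hence from $\mathcal G$. Compressing $T^2$ along $D$ cuts the torus along $c$ and caps the two resulting circles with parallel copies of the disk, carrying $\mathcal G$ (which avoided $c$) onto a sphere. Because compressing the standard torus along a meridian or a longitude produces a standardly embedded sphere in $\R^3$, the graph $\mathcal G$ now lies on an unknotted sphere and is therefore planar, and the compression is realised by an isotopy of $\mathcal G$ supported near $D$. This case also disposes of the $\Theta_n$-graphs underlying ravels: all their cycles share the two vertices, so the no-link condition is automatic and only the no-knot condition is needed, giving at once that ravels do not embed on the torus.

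The main obstacle is the remaining case, in which the embedding is \emph{filling}: every complementary face is a disk, so no essential curve lies in $T^2\setminus\mathcal G$ and there is no a priori free curve to compress along. Here I would argue directly with the essential cycles of $\mathcal G$. If $\mathcal G$ contains two vertex- and edge-disjoint essential cycles, the no-link hypothesis forces them to be parallel and unlinked, hence both meridional or both longitudinal, and the annular region they cobound on $T^2$ supplies the free essential curve needed to return to the previous case; the delicate point is showing, using planarity of $G$ together with the Euler-characteristic constraint $V-E+F=0$ for a cellular toroidal embedding, that such a disjoint pair can always be produced (or that $\mathcal G$ is otherwise so simple that it is planar by inspection). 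I expect this reduction to be the genuinely hard step, because one must rule out that the disk faces of the filling embedding conceal a hidden knot or link while simultaneously freeing up a meridian or longitude; note that we have full ambient freedom in $\R^3$ and need not preserve $T^2$, which is what makes the normalisation of an unknotted slope to a standard meridian or longitude possible even though the corresponding Dehn twists are not themselves realisable by ambient isotopies fixing the torus setwise. Verifying that this normalisation can be carried out compatibly with the combinatorics of how the faces of $\mathcal G$ fit together is where the real work of the proof lies.
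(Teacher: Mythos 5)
Your reduction of the conjecture to the deformation statement (Theorem~\ref{goal}) is exactly right, and your treatment of the case where some complementary face of $\mathcal{G}$ in $T^2$ contains an essential simple closed curve is sound: disjointness forces that curve to be parallel to an essential cycle $\gamma$ of the graph, the no-link hypothesis forces the common slope to be $(0,\pm 1)$ or $(\pm 1,0)$ (two parallel $(a,b)$-curves have linking number $ab$), and compressing along the meridional or longitudinal disk puts $\mathcal{G}$ on an embedded sphere. This corresponds to the easy cases of step~(1) of the paper's proof, where a meridian or longitude misses the graph.

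However, there is a genuine gap, and you name it yourself: the filling case, in which every complementary face is a disk, is the entire content of the theorem, and you do not prove it. Two concrete problems. First, your proposed reduction via two disjoint essential cycles fails as stated: in the filling case both annuli cobounded by such a pair on $T^2$ must meet the graph (otherwise a face would contain an essential curve), so neither annulus core is a ``free'' curve and you cannot return to the previous case; moreover such a disjoint pair need not exist at all --- the paper instead reduces to the presence of a single unknotted cycle of type $T(1,n)$, $n>0$, which always exists unless the graph is already planar or abstractly nonplanar. Second, no Euler-characteristic bookkeeping by itself produces the required ambient isotopy: the paper's proof of exactly this case occupies steps~(2)--(7), where one first shows (by a Morse-theoretic argument using both the no-knot and no-link hypotheses) that some meridian meets $\mathcal{G}$ in a single point, then untwists $T(1,n)$ to a longitude, and then uses Tutte's Theorem~\ref{Tutte} --- bipartiteness of the conflict graph of that longitude --- to control crossing types between spatial fragments, which are finally separated by rotations about the longitude and individually flattened using Whitney flips. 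Your proposal contains no substitute for this machinery; as written it is a correct setup for the easy cases together with an acknowledgment that the main case remains open, not a proof of the conjecture.
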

The main result (Theorem~\ref{goal}) proves this conjecture without assuming 3-connectivity or simpleness using topological graph theory. The argument gives an explicit deformation from embeddings of planar graphs on the torus that contain neither a nontrivial knot nor a nonsplit link into the plane. A much shorter but less intuitive proof that relies on deep theorems of topological graph theory is given in~\cite{short}. The argument on hand not only presents a self-sufficient argument but will hopefully also give the reader a better feeling for the nature of graphs that are embedded on the torus.

\begin{thm}[Existence of knots and links]\label{goal}
Let $G$ be a planar graph and $f:G\rightarrow \R^3$ be an embedding of $G$ with image~$\mathcal{G}$. If $\mathcal{G}$ lies in the torus~$T^2$ and contains no subgraph that is a nontrivial knot or a nonsplit link, then the embedding $f$ is trivial.
\end{thm}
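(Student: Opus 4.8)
\section*{Proof proposal}

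The plan is to work in $S^3 = \R^3\cup\{\infty\}$, where $T^2$ is the standard Heegaard torus bounding a solid torus $V$ on the inside and a solid torus $W$ on the outside; a spatial graph is planar exactly when it can be ambient isotoped into an unknotted $2$-sphere, so it suffices to maneuver $\mathcal{G}$ onto such a sphere. I would first record the standard facts about curves on $T^2$ that do the real work: every simple closed curve is null-homotopic or isotopic to a $(p,q)$-curve with $\gcd(p,q)=1$; a $(p,q)$-curve is an unknot iff $|p|\le 1$ or $|q|\le 1$ and is a nontrivial torus knot otherwise; and two disjoint essential simple closed curves are parallel, with two parallel copies of a $(p,q)$-curve forming a link of linking number $pq$, hence a nonsplit link unless the common slope is the meridian $(0,1)$ or the longitude $(1,0)$. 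These are precisely the points where the no-nontrivial-knot and no-nonsplit-link hypotheses become combinatorial constraints: every essential cycle of $\mathcal{G}$ must have a coordinate equal to $\pm1$, and any two \emph{disjoint} essential cycles must both be meridians or both be longitudes.

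The first reduction isolates the configurations that are immediately planar. If $\mathcal{G}$ lies in a disk $\Delta\subset T^2$ it is planar, since an embedded disk in $S^3$ is unknotted. More usefully, if (after ambient isotopy) $\mathcal{G}$ is disjoint from some meridian $m$ or some longitude $\ell$, then $\mathcal{G}$ lies in the annulus $T^2\setminus c$ with $c=m$ or $c=\ell$; capping the two boundary circles of this annulus with the parallel disks they bound in $V$ (resp.\ $W$) exhibits $\mathcal{G}$ on a $2$-sphere bounding a ball, so $\mathcal{G}$ is planar. The whole problem therefore reduces to: after an ambient isotopy of $S^3$, arrange $\mathcal{G}$ to be disjoint from a meridian or a longitude.

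To achieve this I would cut $T^2$ along $\mathcal{G}$ into its complementary faces and induct on a measure of how $\mathcal{G}$ meets a fixed meridian $m$ and a fixed longitude $\ell$, the base case being the reduction above. The slope constraints from the previous paragraph force most faces meeting $m$ (resp.\ $\ell$) to be disks and sharply limit which slopes can coexist. I would then locate an innermost disk face, or a pair of edges running parallel across a disk face and crossing $m$ with opposite signs, and perform an explicit local deformation --- a finger move pushing a strand of $\mathcal{G}$ across a subdisk of the meridian disk of $V$ --- cancelling two intersection points and strictly lowering the complexity, without creating self-intersections and without introducing a knot or link; the cancelling subdisk is embedded precisely because the cycles involved are unknotted and unlinked. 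Abstract planarity of $G$ enters to guarantee such a reducing feature always exists: a genuinely toroidal graph such as $K_5$ or $K_{3,3}$ would obstruct every such move, whereas planarity forces the face structure to present a cancellable or compressible configuration.

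The step I expect to be the main obstacle is exactly this guarantee: showing that under the no-knot/no-link hypotheses a complexity-reducing move is \emph{always} available even when $\mathcal{G}$ fills $T^2$, so that no essential curve is a priori disjoint from it --- the wedge of a longitude and a meridian being the instructive test case, since it fills $T^2$ yet is planar. Closing this requires choosing the complexity measure so that it decreases monotonically for \emph{either} the meridian or the longitude, combining the slope constraints with abstract planarity to produce the move, and verifying at each stage that the explicit deformation preserves both embeddedness and the hypotheses so that the induction terminates. This bookkeeping, rather than any single isotopy, is the technical heart of the argument.
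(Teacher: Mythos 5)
Your preliminary reductions are correct and essentially reproduce step~(1) of the paper's argument: the classification of essential simple closed curves on $T^2$, the observation that two disjoint essential cycles of $\mathcal{G}$ must be parallel and hence (to avoid a nonsplit link) both meridians or both longitudes, and the capping-off argument showing $\mathcal{G}$ is planar once it misses a meridian or a longitude are all sound. But everything after that is a plan rather than a proof, and the gap sits exactly where you place it yourself: you never define the complexity measure, never exhibit the complexity-reducing move, and never prove that abstract planarity guarantees such a move exists whenever $\mathcal{G}$ meets every meridian and every longitude. The sentence ``planarity forces the face structure to present a cancellable or compressible configuration'' is the theorem restated as a hope, and ``$K_5$ or $K_{3,3}$ would obstruct every such move'' is an expectation, not an argument. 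There is also a concrete technical problem with the one move you do describe: a finger move pushing a strand across a subdisk of the meridian disk of $V$ takes that strand off $T^2$, and there is no guarantee it can be re-flattened onto the torus on the far side of $m$ without colliding with the rest of $\mathcal{G}$, which may fill $T^2$; so the inductive hypothesis ``$\mathcal{G}$ lies on $T^2$'' --- the very thing that gives you the slope constraints --- need not be preserved. (The remark that ``the cancelling subdisk is embedded because the cycles involved are unknotted and unlinked'' does not address this; embeddedness of a subdisk of the meridian disk is automatic, and the real issue is where the pushed strand lands.)

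For comparison, the paper closes this hole with a specific combinatorial engine: Tutte's theorem, that a graph is abstractly planar if and only if the conflict graph of every cycle is bipartite. Concretely, it (i) shows by a Morse-type argument that some meridian meets $\mathcal{G}$ in a single point, (ii) twists along that meridian so that an essential unknotted cycle $T(1,n)$ becomes the longitude $l$, and (iii) uses bipartiteness of the conflict graph of $l$ --- together with the fact that on the resulting annular diagram two fragments can only cross in one sense --- to rotate one bipartition class of fragments to the other side of $l$ and then untangle each fragment individually via Whitney flips. That is the precise mechanism converting ``abstractly planar'' into ``untangleable''; your proposal needs such a lemma (Tutte's criterion, or Wu's disk criterion used in the paper's alternative proof) to power the induction. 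Without it, the hardest step of the theorem remains open, so the proposal is an outline rather than a proof.
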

\begin{rem}\label{AusGoal}
The above theorem can be reformulated as
\begin{itemize}
\item Let $\G$ be a nontrivial embedding of a planar graph that contains neither a nontrivial knot nor a nonsplit link. Then the minimal genus of $\G$ is strictly greater than one.\\
\item On the torus exist no nontrivial embeddings of planar graphs that neither contain a nontrivial knot nor a nonsplit link.
\end{itemize}
\end{rem}

\begin{cor}[Ravels do not embed on the torus]
Every nontrivial embedding of a $\theta_{n}$-graph on the torus contains a nontrivial knot.
\end{cor}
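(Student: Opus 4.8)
The plan is to derive the corollary directly from Theorem~\ref{goal}, the only additional ingredient being the observation that a $\theta_n$-graph can never carry a nonsplit link. First I would record that $\theta_n$ is abstractly planar: placing its two vertices on a line and drawing the $n$ edges as nested arcs gives an embedding in $\R^2$, so $\theta_n$ satisfies the hypothesis of Theorem~\ref{goal}. This lets us invoke the theorem for any embedding of $\theta_n$ whatsoever, in particular for one lying on the torus.

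Next I would establish the link-freeness claim. A nonsplit link in a spatial graph is carried by two (or more) pairwise disjoint cycles. In $\theta_n$, however, every cycle is the union of exactly two of the $n$ edges and therefore passes through both vertices; any two distinct cycles thus meet in at least the two vertices and can never be disjoint. Moreover the union of two such cycles fails to be a $1$-manifold at a shared vertex, so it cannot be a link at all. Hence no subgraph of an embedded $\theta_n$ is a nonsplit link, and the only entanglement detectable by Theorem~\ref{goal} is a nontrivial knot carried by a single cycle.

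With these two facts in place the corollary is immediate by contraposition. Suppose $f$ is an embedding of $\theta_n$ on the torus that contains no nontrivial knot. Since it also contains no nonsplit link, Theorem~\ref{goal} applies and forces $f$ to be planar, i.e.\ trivial. Equivalently, any nontrivial (non-planar) toroidal embedding of $\theta_n$ must contain a nontrivial knot, which is exactly the assertion. The companion statement that ravels do not embed on the torus then follows at once, since a ravel is by definition a nonplanar $\theta_n$-embedding containing no nontrivial knot.

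The substance of the argument lives entirely in Theorem~\ref{goal}; the only points requiring care here are essentially definitional -- checking that ``nontrivial embedding'' is synonymous with ``non-planar'' as the theorem uses it, and confirming that the combinatorial link-freeness argument genuinely rules out \emph{all} nonsplit links, not merely two-component ones. Neither is a serious obstacle, so I expect the proof to be a short deduction rather than an independent argument.
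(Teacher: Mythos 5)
Your proposal is correct and matches the paper's own proof: both argue that any two cycles in a $\theta_n$-graph share its two vertices, so no pair of disjoint cycles (hence no nonsplit link) exists, and then apply Theorem~\ref{goal} using the abstract planarity of $\theta_n$. Your write-up merely spells out these same two observations in more detail, so there is nothing to change.
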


The topological structure of the surface on which the spatial graph is embedded is crucial for the theorem. For all closed orientable surfaces of genus $g > 1$, there exist examples of planar spatial graphs that are neither knotted or linked nor embeddable on a closed orientable surface with genus less than $g$. A famous example for the closed oriented genus two surface is Kinoshita's $\theta$-curve~\cite{Kinoshita} (Figure~\ref{KinoshitaThetaSurface}). Kinoshita's $\theta$-curve is a ravel. Ravels give more examples, since a ravel is by definition described by the embedding of a planar graph that is nontrivial although it contains no nontrivial knot or nonsplit links, but that does not embed on the torus by Theorem~\ref{goal}. As every spatial graph embeds on a compact closed surface of some genus, it follows that ravels correspond to spatial graphs that are neither trivial embedded nor knotted or linked but which are realisations of planar graphs on higher genus surfaces. One needs arbitrarily high genera to accommodate all ravels which can be shown using a Borromean construction as given by Suzuki~\cite{Suzuki}.
\begin{figure}[H]
	\centering
	\def\svgwidth{300pt}
	 \input{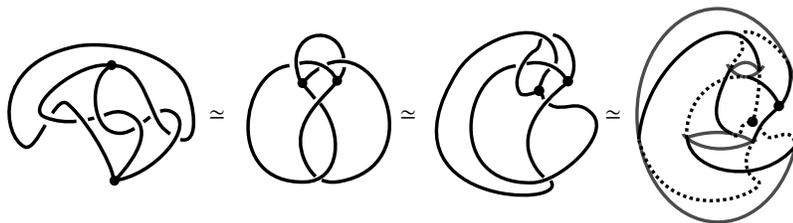}
\caption{Kinoshita's $\theta$-curve. Kinoshita's $\theta$-curve can be embedded on the closed connected surface of genus two. This is its minimal genus since it does not embed on the torus by Remark~\ref{AusGoal}.}
\label{KinoshitaThetaSurface}
\end{figure}

Castle, Evans and Hyde~\cite{Hyde} proved that polyhedral toroidal molecules which contain a nontrivial knot are chiral. The chirality of polyhedral toroidal molecules which contain a nonsplit link is shown in~\cite{chirality}. Note that topological chirality implies chemical chirality. I.e., if the spatial graph describing a molecule is chiral as topological object, it follows that there is also no chemical transformation that deforms the molecule to its mirror image. This is due to chemically realisable transformations being more restrictive than ambient isotopies.

\begin{thm}[Chirality \cite{Hyde}, \cite{chirality}] \label{chiral} 
Let $G$ be a simple $3$-connected planar graph and $f:G\rightarrow T^{2} \subset \R^3$ be an embedding of $G$ with image~$\mathcal{G}$ on the torus $T^{2}$. If $\mathcal{G} \subset T^2$ is nontrivial embedded, then $\mathcal{G}$ is topologically chiral in~$R^3$.
\end{thm}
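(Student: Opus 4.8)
The plan is to obtain chirality from the presence of a chiral torus knot or link that Theorem~\ref{goal} forces to sit inside $\mathcal{G}$. First I would invoke the contrapositive of Theorem~\ref{goal}: since $\mathcal{G}$ lies on $T^2$ and is nonplanar, it contains a subgraph $L$ that is a nontrivial knot or a nonsplit link. As $L$ is carried by the standard torus, each of its components is an essential simple closed curve on $T^2$, and disjoint essential curves on a torus are mutually parallel; hence $L$ is a torus knot or a torus link whose components all have a single slope $(p,q)$ with $\gcd(p,q)=1$. Nontriviality (for a knot) and non-splitness (for a link, whose parallel components have linking number $pq$) both force $p,q\neq 0$. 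The starting fact is then that such torus knots and links are \emph{chiral}: the mirror image of $T(p,q)$ is $T(p,-q)$, and $T(p,q)\not\cong T(p,-q)$, a fact detectable by standard invariants (for knots this is classical; for the relevant torus links it follows from the behaviour of the signature or the Jones polynomial under reflection).

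The real work is to transfer the chirality of the sub-object $L$ to the whole spatial graph $\mathcal{G}$, and this is where $3$-connectivity and simpleness are indispensable. I would first establish a rigidity statement: for a simple $3$-connected abstractly planar graph, the nonplanar embedding on the torus is essentially unique, so that the torus $T^2$ and the homology classes in $H_1(T^2)\cong\mathbb{Z}^2$ carried by the cycles of $\mathcal{G}$ are determined, up to isotopy in $\R^3$, by the spatial graph alone. This is a toroidal analogue of Whitney's theorem on the uniqueness of embeddings of $3$-connected planar graphs, and it implies that the set of torus knot/link types realised by the homologically essential cycles of $\mathcal{G}$ is an ambient-isotopy invariant of $\mathcal{G}$; by the previous paragraph this set is $\{T(p,q)\}$.

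Now suppose, for contradiction, that $\mathcal{G}$ is amphichiral, i.e.\ ambient isotopic to its mirror image $\mathcal{G}^{*}$. Reflection carries the essential cycles of $\mathcal{G}$ to essential cycles of $\mathcal{G}^{*}$ of the reflected slope, so the invariant attached to $\mathcal{G}^{*}$ is $\{T(p,-q)\}$. Since ambient isotopy preserves this invariant and $\mathcal{G}\cong\mathcal{G}^{*}$, I would conclude $\{T(p,q)\}=\{T(p,-q)\}$, forcing $T(p,q)\cong T(p,-q)$ and contradicting the chirality recorded above. Hence no amphichiral isotopy exists and $\mathcal{G}$ is topologically chiral.

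The step I expect to be the main obstacle is the rigidity statement: proving that the torus, and with it the slope of the essential cycles, is genuinely intrinsic to $\mathcal{G}$, so that an amphichiral isotopy is forced to reverse that slope. Simpleness and $3$-connectivity are exactly what prevent the graph from admitting inequivalent toroidal embeddings or from being isotoped off the torus in a way that decouples mirror symmetry of $\mathcal{G}$ from slope reversal on $T^2$. A secondary point requiring care is to verify chirality uniformly for every nonsplit torus link that can arise, not merely for torus knots; I would treat this through the reflection behaviour of the linking number together with the signature.
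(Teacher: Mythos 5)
Your proposal cannot be compared against a proof in the paper, because the paper does not prove Theorem~\ref{chiral}: it quotes it, citing \cite{Hyde} for the knot case and \cite{chirality} for the link case, the role of Theorem~\ref{goal} being only to guarantee that a nonplanar toroidal embedding contains a nontrivial knot or a nonsplit link. Judged as a reconstruction of those cited proofs, your argument has a genuine gap, and it is exactly your ``starting fact'': it is \emph{false} that every nonsplit torus link is chiral. The Hopf link $T(2,2)$ is amphichiral as an unoriented link. Realize it as the unit circle in the $xy$-plane together with the $z$-axis (plus the point at infinity in $S^3$): the reflection $(x,y,z)\mapsto(x,y,-z)$ is orientation-reversing and preserves this link setwise. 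Equivalently, the mirror image of the Hopf link is obtained from it by reversing the orientation of just one component; this is why linking number, signature and the Jones polynomial --- all of which presuppose chosen orientations --- detect nothing here. Those invariants show only that the \emph{oriented} Hopf link is chiral, which is not what you need: topological chirality of a spatial graph is an unoriented notion, since a graph carries no preferred orientations on its cycles.

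This exceptional case genuinely occurs under the hypotheses of the theorem, so the gap is not cosmetic. Embed the triangular prism (simple, $3$-connected, abstractly planar) on $T^2$ with its two triangles as parallel $(1,1)$-curves and the three connecting edges in the annulus between them. Every cycle of this spatial graph lies in a closed annulus on $T^2$ and hence is unknotted, and its unique pair of disjoint cycles is the two triangles, forming a Hopf link; so the embedding is nonplanar, yet the set of knot and link types realised by its subgraphs is mirror-symmetric, and your argument terminates without a contradiction even though the theorem asserts chirality. The chirality of this example must instead be extracted from the graph structure: the connecting edges force any homeomorphism carrying $\mathcal{G}$ to a spatial prism to preserve or reverse the cyclic orientations of \emph{both} triangles simultaneously, so the linking number computed with matched orientations becomes an honest invariant and obstructs amphichirality. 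An argument of this genuinely graph-theoretic kind is what \cite{chirality} supplies and what your proposal lacks --- it is precisely why the link case needed a separate paper. A secondary remark: the ``rigidity statement'' you flag as the main obstacle is not actually needed for the transfer step, since the set of knot types of cycles of $\mathcal{G}$, and of link types of families of pairwise disjoint cycles, is automatically an ambient-isotopy invariant of a spatial graph, with no appeal to uniqueness of the toroidal embedding; the irreparable defect is the one above, namely that for Hopf-link-only embeddings this invariant simply fails to distinguish $\mathcal{G}$ from its mirror image.
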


\subsection*{Acknowledgements}
The first author thanks Tom Coates, Erica Flapan, Youngsik Huh, Stephen Hyde, Danielle O'Donnol, Makoto Ozawa, and Kouki Taniyama for helpful comments and discussions and is very grateful for the extensive comments of the first referee that improved the quality of the paper a lot. Matt Rathbun is particularly thanked for long discussions and the outline of the second step in the proof. The research was financially supported by the Roth studentship of Imperial College London mathematics department, the DAAD, the Evangelisches Studienwerk, the Doris Chen award, and by a JSPS grant awarded to Kouki Taniyama. The second author was partially supported by EPSRC Grants EP/G039585/1 and EP/H031367/1, and the Leverhulme Trust Grant RP2013-K-017.

\section{Proof of the theorem by giving an explicit isotopy}
\subsection{Outline of the proof of Theorem~\ref{goal} and preparations.}

The idea of the proof of Theorem~\ref{goal} is the following: Let~$G$ be a planar graph and $\mathcal{G}$ be the spatial graph that is the image of the embedding $f:G \rightarrow T^2$. Assume that $\mathcal{G}$ contains neither a nontrivial knot nor a nonsplit link. We give a general construction for an explicit ambient isotopy in $\R^3$ from the spatial graph~$\mathcal{G}$ to a trivial spatial graph~$\mathcal{G}'$ (which is embedded in the plane~$\R^2 \subset \R^3$). This demonstrates that any embedding of a planar graph that is embedded on the torus and contains neither a nontrivial knot nor a nonsplit link is trivial. The ambient isotopy is illustrated in Figure~\ref{Fig20}.\\
To construct the ambient isotopy in $\R^3$ that transforms the embedding~$\mathcal{G} \subset T^2$ of a planar graph~$G$ that contains neither a nontrivial knot nor a nonsplit link to a trivial spatial graph~$\mathcal{G}'$, we first note using Lemma~\ref{conn} that it is sufficient to consider connected graphs. Step~(1) of the proof shows that it is furthermore sufficient to restrict to spatial graphs that contain a subgraph of the form~$T(1,n), n>0$ (defined below) since otherwise, the graphs would be nonplanar or would already be trivially embedded. For these graphs, the desired ambient isotopy consists of three deformations. The first one is a twist around the core of the torus which transforms~$T(1,n)$ into the longitude~$l$ ($T(1,n)$ and $l$ are marked red in Figure~\ref{Fig20}). This twist is described in step~(3). Step~(2) of the proof is a technicality that ensures the existence of the twist given in step~(3) by arguing that there exists a meridian of $T^2$ that intersects $\mathcal{G}$ in one point only. As~$G$ is planar by assumption, it follows from Theorem~\ref{Tutte} that the conflict graph of~$G$ with respect to~$l$ is bipartite. The bipartiteness of the conflict graph is used together with the property of the spatial graph being embedded on the torus to show in step~(4) and step~(5) of the proof that a second ambient isotopy can be performed. This ambient isotopy is given in step~(6). It rotates the spatial fragments in space around the longitude~$l$, so that $\mathcal{G}$ is ambient isotoped to a spatial graph that is embedded on the surface $8 \times S^1$, such that conflicting spatial fragments lie in different components of $(8 \times S^1) \setminus (P \times S^1)$. We use $8$ to denote a loop with one point~$P$ of self-intersection.
Step~(7) shows that there is an individual ambient isotopy for each spatial fragment that transforms it in space to a trivial spatial fragment, independently from all other spatial fragments.
The combination of the ambient isotopies given in step~(3), step~(6) and step~(7) gives the desired deformation of $\mathcal{G}$ into the plane which proves Theorem~\ref{goal}. 
\begin{figure}[h]
	\centering
	\def\svgwidth{420pt}
	 \input{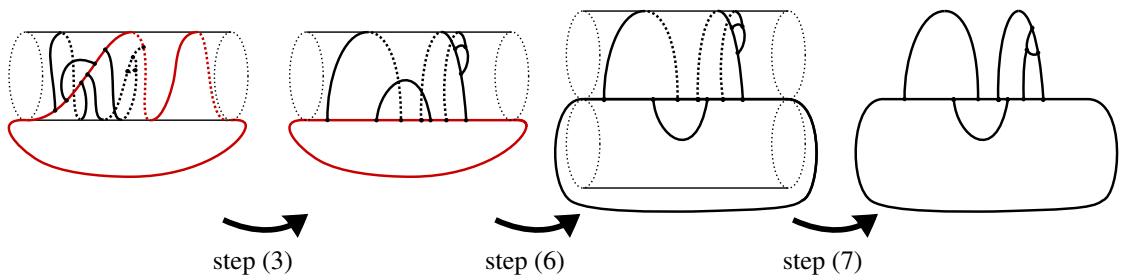}
\caption{The ambient isotopy of the proof in Section~\ref{proof}. }
\label{Fig20}
\end{figure}

To work on the torus, we define the following: A \textbf{meridian} of a solid torus~$T$ is an essential simple closed curve in $\partial T$ that bounds a disc in $T$. (An essential simple closed curve in $\partial T$ does not bound a disc in $\partial T$.) The \textbf{preferred longitude} is the simple closed curve in $\partial T$ that intersects the meridian once and has linking number zero with the core of the torus~$T$. For the standard torus~$T^2$ in $\R^3$ define the meridian and preferred longitude analogously by taking the interior of the solid torus to be the bounded component of the complement of $T^2$ in $\R^3$. Whenever we use the term \textbf{longitude} in this paper, we will refer to the preferred longitude.\\
To describe knots and links on the torus, we define a \textbf{torus knot} or \textbf{torus link} to be a knot or link that is embedded on the standard torus $T^2$ following the longitude of the standard torus $p$ times and the meridian $q$ times. Those knots or links are denoted by \textit{\textbf{T(p,q)}}  with $p,q$ integers. Therefore, a meridian is denoted by $m=T(0,1)$ and a longitude by $l=T(1,0)$. An unknot that bounds a disc in $T^2$ is denoted by $T(0,0)$. \\
The concept of the `conflict graph' defined below will be needed (compare Figure~\ref{conflictgraph}), to ensure planarity of the considered graphs during the proof:\\
A \textbf{cycle} is a simple closed path in a graph or in a spatial graph. Let $C$ be a cycle in a graph $G$. The connected components $f_i$ of the graph $G\setminus C$ are called  \textbf{fragments} of $G$ with respect to $C$ and two fragments $f_i$ and $f_j$ \textbf{conflict} if at least one of the following conditions is satisfied:  

\begin{itemize}
\item
There exist three points on $C$ to which both components $f_i$ and $f_j$ are attached to. 
\item
There exist four \textbf{interlaced} points $v_1, v_2, v_3, v_4$ on $C$ in cyclic order so that $f_i$ is attached to $C$ at $v_1$ and $v_3$ and $f_j$ is attached to $C$ at $v_2$ and $v_4$.
\end{itemize}

Let $f_i$ be a fragment of $G$ with respect to $C$. Call the set of points in which $f_i$ is attached to $C$ the \textd{endpointset} $v(f_i)$ of the fragment~$f_i$.  Note that vertices of $C$ can be elements of different endpointsets $v(f_i)$ and $v(f_j)$ where $f_i$ and $f_j$ might or might not conflict. The restriction of a spatial graph to a fragment is a \textd{spatial fragment}.

Two sets $\{ p_{1} \dots p_{n} \} $ and $\{ q_{1} \dots q_{m} \} $ of points on $C$ are \textd{nested}, if all points of one set lie in between two points of the other set. The elements of two nested sets do not conflict by definition. Two fragments $f_a$ and $f_b$ are nested if their endpointsets on $C$ are nested. 

If a basepoint on $C$ is given and a pair of nested fragments, call the fragment \textd{outer fragment} whose endpoints are first reached form the basepoint. Its endpointset is called \textd{outer points}. Call the other fragment \textd{inner fragment} and its endpointset \textd{inner points}. Fix a point $p$ on $C$, take an orientation of $C$ and a parametrisation $f:[0,2\pi]~\rightarrow~C, f(0)=f(2\pi)=p$ that respects the orientation. For two points $a$ and $b$ on $C$ we say that $a<b$ if $f^{-1}(a)<f^{-1}(b)$.

\begin{rem}[Cases of non-conflicting fragments] \label{4cases}
Let $f_a$ and $f_b$ be two fragments of a connected graph with respect to a cycle $C$. Let $v(f_a)=v_{a1},\dots, v_{ar}$  be the endpointset of $f_a$ and let \mbox{$v(f_b)=v_{b1},\dots, v_{br}$} be the endpointset of $f_b$. Then it follows from the definition of conflicting that $f_a$ and $f_b$ do not conflict if and only if $f_a$ or $f_b$ are attached to $C$ in one point only or if for all elements $v(f_a)$ and $v(f_b)$ (up to transposition of $a$ and $b$) one of the following two cases holds: 
\begin{enumerate}
\item $v_{a1} < \dots < v_{ar} \leq v_{b1} < \dots  < v_{br}$ 
\item there exist two points $v_{ai}$ and $v_{a(i+1)}$ in $v(f_a)$  so that $v_{ai}\leq v_{b1},\dots, v_{br}\leq v_{a(i+1)}$ 
\end{enumerate}
The sets $v_{a1},\dots, v_{ar}$ and $v_{b1},\dots, v_{br}$ in the second case above are nested; the inner points are the points $v_{b1},\dots, v_{br}$.
\end{rem}

The \textbf{conflict graph} of a cycle $C$ in a graph $G$ is constructed by introducing a vertex $u_i$ for every fragment~$f_i$ of $G$ with respect to $C$ and adding an edge between the vertices $u_i$ and $u_j$ if and only if the fragments $f_i$ and $f_j$ conflict (Figure~\ref{conflictgraph}). In the proof of Theorem~\ref{goal}, we will use the planarity of~$G$ in form of the following statement:

\begin{figure}
	\def\svgwidth{300pt}
	 \input{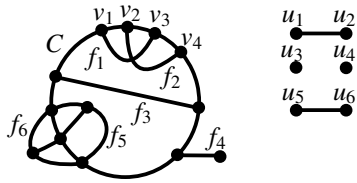}
\caption{The graph shown on the left side is planar. It has a bipartite conflict graph which is shown on the right.}
\label{conflictgraph}
\end{figure}

\begin{thm}[Tutte's Theorem~\cite{Tutte}] \label{Tutte}
A graph~$G$ is planar if and only if the conflict graph of every cycle in $G$ is bipartite. 
\end{thm}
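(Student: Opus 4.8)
The plan is to prove both implications, treating the easy geometric direction first and then the harder combinatorial converse by contraposition. Throughout I identify a planar embedding with an embedding in $S^2$ and use the conflict-graph vocabulary set up above.

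\emph{Necessity (planar implies every conflict graph is bipartite).} Suppose $G$ has an embedding in $S^2$ and let $C$ be any cycle. Its image is a simple closed curve, so by the Jordan curve theorem it separates $S^2$ into two open discs $D_{\mathrm{in}}$ and $D_{\mathrm{out}}$. Each fragment $f_i$ of $G$ with respect to $C$ is connected and meets $C$ only in its endpointset $v(f_i)$, so its interior lies entirely in one of the two discs; this assigns to every $f_i$ a colour ``in'' or ``out''. I claim that conflicting fragments receive different colours. If $f_i$ and $f_j$ lay in the same disc, say $D_{\mathrm{in}}$, then in either conflict case a Jordan-curve argument inside the closed disc forces a crossing: in the four-interlaced-point case a path of $f_i$ from $v_1$ to $v_3$ splits $D_{\mathrm{in}}$ so that $v_2$ and $v_4$ lie on opposite sides, whence any path of $f_j$ from $v_2$ to $v_4$ must cross it, and the three-common-point case is analogous. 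Hence the ``in''/``out'' labelling is a proper $2$-colouring, so the conflict graph is bipartite.

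\emph{Sufficiency (every conflict graph bipartite implies planar).} I would argue the contrapositive: if $G$ is nonplanar I produce a single cycle whose conflict graph contains an odd cycle. By Kuratowski's theorem $G$ contains a subdivision $H$ of $K_5$ or of $K_{3,3}$, and in each case there is a natural cycle whose chords interlace enough to force an odd cycle. For $K_{3,3}$ with parts $\{a_1,a_2,a_3\}$ and $\{b_1,b_2,b_3\}$, take the Hamiltonian cycle $a_1 b_1 a_2 b_2 a_3 b_3$; the three remaining edges $a_1 b_2$, $a_2 b_3$, $a_3 b_1$ are pairwise interlaced, so they give a triangle in the conflict graph. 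For $K_5$ on $v_1,\dots,v_5$ take the cycle $v_1 v_2 v_3 v_4 v_5$; the five pentagram chords $v_1 v_3, v_2 v_4, v_3 v_5, v_4 v_1, v_5 v_2$ interlace in a cyclic pattern and yield a $5$-cycle in the conflict graph. Replacing single edges by internally disjoint paths under subdivision keeps the same attachment points on the cycle and hence the same interlacements, so the conflict graph of this cycle inside $H$ is still non-bipartite.

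The main obstacle is the passage from the Kuratowski subgraph $H$ to the whole graph $G$. A fragment of $G$ with respect to the cycle $C$ is a union of fragments of $H$ with respect to $C$, since the extra edges of $G$ can \emph{merge} several $H$-fragments into one, and such a merge can destroy the odd cycle; for instance, if all three interlacing chords of the $K_{3,3}$ example ended up in a single $G$-fragment, no conflict among them would survive. To close this gap I would either choose $H$ to be minimal in a suitable sense, or run a case analysis showing that any merge of mutually interlacing fragments either preserves an odd cycle among the remaining conflicts or produces a fragment attached to three points of $C$, which again forces a conflict; controlling this merging is the delicate heart of the converse. An approach that avoids Kuratowski entirely is a direct constructive planarity proof, building an embedding by adding the bridges of successive cycles one at a time and using bipartiteness of each conflict graph to decide consistently on which side of the current cycle each bridge is drawn; there the difficulty shifts to proving that these local side-choices can be made globally compatible.
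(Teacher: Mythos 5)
The paper offers no proof of this statement at all: it is imported as a classical theorem of Tutte and used as a black box, so your proposal can only be judged on its own merits. Your necessity direction is essentially complete and correct: the Jordan-curve two-colouring of fragments by ``inside''/``outside'', with a crossing argument for the interlaced case and the three-common-attachments case, is the standard proof of that half.

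The sufficiency direction, however, has a genuine gap --- exactly the one you flag yourself, and it is not a removable technicality. From a Kuratowski subdivision $H \subseteq G$ you obtain a cycle $C$ whose conflict graph \emph{with respect to $H$} is non-bipartite, but the theorem requires a cycle whose conflict graph \emph{with respect to $G$} is non-bipartite, and a single fragment of $G$ may swallow several fragments of $H$. Concretely: in your $K_{3,3}$ example, subdivide each of the three interlacing chords $a_1b_2$, $a_2b_3$, $a_3b_1$ once, and join the three subdivision vertices to one another by edges disjoint from the Hamiltonian cycle $C$. The resulting graph is still nonplanar, yet all three chords now lie in one fragment of $G$ with respect to $C$, so the conflict graph of $C$ in $G$ is a single vertex and is bipartite; your chosen cycle then proves nothing, and one must locate some \emph{other} cycle of $G$ with non-bipartite conflict graph. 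This shows that minimality of $H$ cannot repair the argument (the merging is caused by edges of $G$ outside $H$, which no choice of $H$ controls), so the entire burden falls on the case analysis or on the alternative inductive embedding argument, neither of which is carried out. Note also that the inductive route is delicate for a reason you do not mention: bipartiteness of all conflict graphs is not obviously hereditary under passing to subgraphs or under merging/splitting of fragments, so the induction hypothesis has to be formulated with care precisely at the step you defer. As it stands, the proposal establishes only the easy half of the theorem.
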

\noindent
A graph is \textbf{bipartite} if its vertices can be divided into two disjoint sets~$S_1$ and~$S_2$ such that every edge of the graph has one endpoint in~$S_1$ and the other endpoint in~$S_2$.

\begin{lem}[Connectivity Lemma~\cite{short}] \label{conn}
The image $\mathcal{G}$ of an embedding $f:G\rightarrow T^{2}\subset \R^3$ of a graph $G$ with $n >1$ connected components on the torus $T^2$  contains either a nonsplit link, or contains no nonsplit link and decomposes into $n$ disjoint components of which at least $n-1$ components are trivial embedded in $R^3$.
\end{lem}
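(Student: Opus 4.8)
The plan is to prove the statement in its useful contrapositive form: assuming $\mathcal{G}$ contains no nonsplit link, I will show that at most one of the $n$ connected components fails to be planar embedded. Write $\mathcal{G}=\mathcal{G}_1\sqcup\dots\sqcup\mathcal{G}_n$ for the decomposition into connected components, each a connected spatial graph lying on $T^2$. First I would dispose of the easy components: call $\mathcal{G}_i$ \emph{bounded} if it is contained in a disk of $T^2$, and note that such a disk is an unknotted disk in $\R^3$, so every bounded component is automatically planar embedded. The topological input needed here is the standard surface-theoretic fact that a connected graph on $T^2$ which is \emph{not} contained in any disk must contain a cycle that is an essential simple closed curve of $T^2$ (equivalently, $\pi_1(\mathcal{G}_i)\to\pi_1(T^2)$ is nontrivial); I would justify this by lifting to the universal cover, or simply cite it. Thus every non-bounded component contains an essential cycle, a curve of type $T(p,q)$ with $(p,q)\neq(0,0)$ and $\gcd(p,q)=1$.

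If at most one component is non-bounded, then at least $n-1$ components are bounded, hence planar, and we are done. So the heart of the argument is the case of two non-bounded components $\mathcal{G}_i,\mathcal{G}_j$ with essential cycles $c_i,c_j$. Since $c_i$ and $c_j$ are disjoint essential simple closed curves on the torus, they are parallel: they cobound an annulus in $T^2$ and share a common slope $(p,q)$. The key computation is that two disjoint parallel copies of a $T(p,q)$ curve on the standard torus have linking number $\lk(c_i,c_j)=\pm pq$ (the framing of $T(p,q)$ induced by the torus). Hence if $pq\neq 0$, then $c_i\cup c_j$ is a two-component sublink of $\mathcal{G}$ of nonzero linking number and is therefore a nonsplit link, contradicting our standing assumption. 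I expect this computation, together with the observation that a primitive slope satisfies $pq=0$ exactly when it is the longitude $l=T(1,0)$ or the meridian $m=T(0,1)$, to be the conceptual crux of the proof.

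It remains to treat the surviving case $pq=0$, where the common slope is a longitude or a meridian; here I would show that in fact \emph{every} component is planar. Since there are at least two non-bounded components, there are two disjoint essential curves $c_i,c_j$ of this slope, and any component $\mathcal{G}_k$ is disjoint from at least one of them, say from $c$ (if $k\neq i$ use $c_i$, otherwise use $c_j$). Then $\mathcal{G}_k$ lies in the complement $T^2\setminus c$, which is an annulus whose core is an unknotted longitude or meridian. Its two boundary curves are parallel longitudes (resp. meridians) of linking number $0$, so the annulus has framing zero over an unknotted core and is ambient isotopic in $\R^3$ to a flat planar annulus; any graph it contains is consequently planar embedded, so $\mathcal{G}_k$ is planar for every $k$. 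Combining the three cases, whenever $\mathcal{G}$ contains no nonsplit link at least $n-1$ of its components are planar embedded, which is precisely the second alternative of the lemma. The main obstacle, as flagged above, is the clean slope dichotomy: the same quantity $pq$ that forces a nonsplit link when it is nonzero is exactly what obstructs the complementary annulus from being unknotted, so the two cases dovetail with no gap.
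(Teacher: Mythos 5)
The paper never proves this lemma itself---it is imported verbatim from~\cite{short}---so there is no in-document proof to compare against; judged on its own merits, your argument is correct and complete. The chain of standard facts you invoke is exactly right: a component not contained in a disk of $T^2$ contains a cycle essential in $T^2$; disjoint essential simple closed curves on $T^2$ are parallel, hence share a slope $(p,q)$; two disjoint parallel $(p,q)$-curves on the standard torus have linking number $\pm pq$, so $pq \neq 0$ forces a nonsplit link; and when $pq = 0$ the complement of such a curve is an annulus with unknotted core and framing zero, hence ambient isotopic to a flat annulus, making every component disjoint from it planar embedded. This dovetailing of the two cases through the single quantity $pq$ gives precisely the dichotomy of the lemma, and in the $pq=0$ case you even get the stronger conclusion that all $n$ components are planar. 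One small point to tighten if you write this up: the disk-containment fact needs slightly more than lifting to the universal cover, since the lift only shows a regular neighbourhood $N$ of the component embeds in $\R^2$; to place the component in a disk \emph{of $T^2$}, note that each boundary curve of $N$ is null-homotopic in $T^2$ and so bounds a disk there, and an innermost-disk or Euler-characteristic argument shows one of these disks must contain $N$. This is standard surface topology, but it is the step your ``or simply cite it'' is really carrying.
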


\subsection{Proof}\label{proof}
\begin{proof}{\textit{(Theorem~\ref{goal})}}\\
\begin{enumerate}

\item
\textbf{Reducing the types of spatial graphs:} We show that it is sufficient to consider connected planar spatial graphs that are embedded on the torus~$T^2$ and do neither contain a nontrivial knot nor a nonsplit link, but do contain a torus unknot~$T(1,n), n>0$.
\vskip 1pt
We can assume that the graph $G$ is connected by Lemma~\ref{conn}. Furthermore, we claim that it is sufficient to consider planar spatial graphs $\mathcal{G}$ that contain a trivial torus knot of the form $T(1,n), n>0$ (respectively $T(n,1)$) since if the only knot types contained in $\mathcal{G}$ are $T(0,0)$, $T(0,1)$ and $T(1,0)$, $\mathcal{G}$ is trivial. We see this below by a case study where we restrict the knot types that occur in the spatial graph $\mathcal{G}$. For a knot of knot type $K$ let $\#K$ denote the number of disjoint copies of K.
\begin{enumerate}
\item $\#T(0,0)=n$\\
If the only knot type contained in the spatial graph $\G$ is $T(0,0)$,  there exists a meridian and a longitude of the torus that do not intersect $\G$. Therefore, $\G$ is trivial.
\item $\#T(0,0)=n, \#T(0,1)=k$ (respectively $\#T(0,0)=n, \#T(1,0)=k$)\\
There exists either a meridian or a longitude of the torus that does not intersect $\G$. Therefore, $\G$ is trivial.
\item $\#T(0,0)=n, \#T(0,1)=1, \#T(1,0)=k$\\(respectively $\#T(0,0)=n, \#T(1,0)=1, \#T(0,1)=k$)\\
$\G$ is trivial.\\
Note, that all graphs of this case are essentially of the form drawn left in Figure~\ref{Fig3}. They can at most differ by fragments that are attached to the meridian or a single longitude only (Figure~\ref{Fig3},a). Adding these fragments does not affect the triviality. That the graphs do not look more complicated can be seen by a contradiction: If at least one fragment is added that has endpoints on two longitudes (Figure~\ref{Fig3},b) respectively on the meridian and on a longitude (Figure~\ref{Fig3},c), this introduces a cycle with segments in both the meridian and a longitude such that the cycle bounds a disc in the torus. This is not possible by the assumptions of this case, since the existence of such a cycle ensures the existence of a knot of type $T(1,1)$.
\item $\#T(0,0)=n, \#T(0,1)=k, \#T(1,0)=m$ with $k,m>1$ \\
This case does not fulfil the assumptions since  $\G$ also contains the unknot~$T(1,1)$ (see right of Figure~\ref{Fig3},d).
\end{enumerate}

\begin{figure}[h]
	\centering
	\def\svgwidth{415pt}
	 \input{FIg3c.pdf_tex}
\caption{(a-c):  If $\G$ contains only one copy of $T(0,1)$, or respectively $T(1,0)$, then $\G$ is trivial. (d): If $\G$ contains disjoint copy of both $T(0,1)$ and $T(1,0)$, $\G$ also contains the unknot~$T(1,1)$.}
\label{Fig3}
\end{figure}

\item
\textbf{Existence of a meridian of $T^2$ that intersects $\mathcal{G}$ in only one point:} Beside for some elementary cases that can be investigated directly, we show this with a Morse-theoretical argument that gives a contradiction: If every meridian of $T^2$ would intersect $\mathcal{G}$ in at least two points and if $\mathcal{G}$ would have a subgraph $T(1,n)$, $n>0$, then $\mathcal{G}$ would contain either a nontrivial knot or a nonsplit link.
\vskip 1pt
Note that the case where $\mathcal{G}$ is the union of $T(1,n)$ and a longitude does not fulfil the assumptions since there exists a meridian that intersect $\mathcal{G}$ in one point only (there are $n$ such meridian, namely one for each intersections of $T(1,n)$ with the longitude).
To construct a contradiction, assume that every meridian of $T^2$ intersects $\mathcal{G}$ in at least two points. Cut $T^2$ along a meridian that intersects the graph with minimal number to get a cylinder~$[0,1] \times S^1$. Define the projection function $f:[0,1] \times S^1 \rightarrow [0,1]; \{x, \alpha \} \mapsto x$. Let $S$ be the set of all pairs of pairwise different paths on the cylinder where one path forms a knot of type $T(1,n)$ and the other path $\pi$ has an endpoint in $f^{-1}(0)$ (Figure~\ref{morse}a). The set $S$ is non-empty because there exists at least one cycle $T(1,n)$ by step~(1) and a second path $\pi$ with endpoint in $f^{-1}(0)$ by the assumption that every meridian of $T^2$ intersects $\mathcal{G}$ in at least two points. Note that $\pi$ either intersects $T(1,n)$ or has the second endpoint on $f_{1}(1)$ otherwise, since the number of intersections between $f^{-1}(0) \cap \mathcal{G}$ is minimal by assumption. Now take a pair of paths in $S$ which maximises the value $f(t_{n})$ where $t_n$ is the intersection point of the two paths in the pair.  

\begin{figure}[h]
	\centering
	\def\svgwidth{420pt}
	 \input{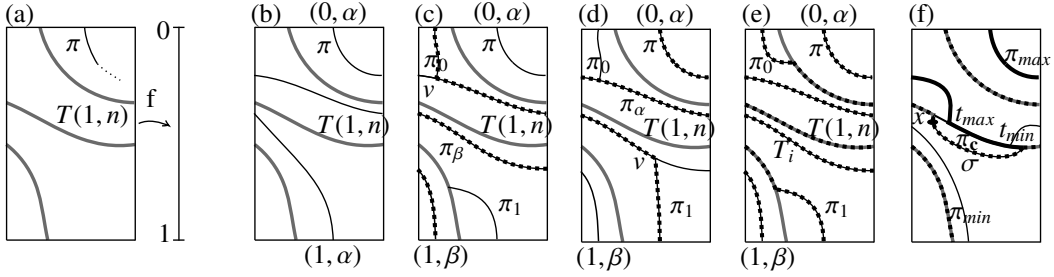}
\caption{Illustrating step~2 of the proof.}
\label{morse}
\end{figure}
\vskip 1pt
If $T(1,n)$ and $\pi$ do not intersect, then there exist two disjoint paths on the cylinder. In this situation, there exists a nontrivial knot or a nonsplit link since the graph is connected as we show now:\\
If $\pi$ is a cycle, a nonsplit link is formed by $T(1,n)$ and $\pi$ (Figure~\ref{morse}b). So let $\pi$ be a path with endpoints $(0, \alpha) \in f^{-1}(0)$ and $(1, \beta) \in f^{-1}(1), \alpha \neq \beta$. As the graph intersects $f^{-1}(0)$ and $f^{-1}(1)$ in a minimal number, wlog we can assume that there exist paths~$\pi_{0}$ from $(0, \beta)$ and $\pi_{1}$ from $(1, \alpha)$ to $T(1,n)$ or to $\pi$ (since the graph is connected).
\begin{enumerate}
\item 
If $\pi_{0}$ intersects $\pi$ in a point $v$ before intersecting $T(1,n)$ (Figure~\ref{morse}c), denote the segment of $\pi$ between $v$ and $(1, \beta)$ by $\pi_{\beta}$. Such a graph contains a nonsplit link where one component is $T(1,n)$ and the other consists of $\pi_{0}$ and $\pi_{\beta}$.\\
If $\pi_{1}$ intersects $\pi$ in a point $v$ before intersecting $T(1,n)$  (Figure~\ref{morse}d), denote the segment of $\pi$ between $(0, \alpha)$ and $v$ by $\pi_{\alpha}$. Again, such a graph contains a nonsplit link where one component is $T(1,n)$ and the other consists of $\pi_{\alpha}$ and $\pi_{1}$. Up to this point the arguments applies to $k=T(1,1)$ as well.
\item
If both paths~$\pi_{0}$ and $\pi_{1}$ intersect $T(1,n)$ before they intersect $\pi$ (Figure~\ref{morse}e), denote the segment (possibly a point) of $T(1,n)$ that lies between the endpoints of $\pi_{0}$ and $\pi_{1}$ by $T_{i}$. The cycle that runs through $\pi$, $\pi_{0}$, $T_i$ and $\pi_{1}$ is a nontrivial knot for $n>1$.\\
If $T(1,n)=T(1,1)$, there are possibilities to connect $(0, \beta)$ and $(1, \alpha)$ to $T(1,1)$ without introducing a nontrivial knot or a nonsplit link. But considering $T(1,1)$, we could exchange the meridian and the longitude in the argument. This gives the extra condition that not only each meridian but also each longitude intersects the graph in at least two points. An elementary investigation shows directly that Theorem~\ref{goal} is valid in those cases.
\end{enumerate}
If $T(1,n)$ and $\pi$ do intersect, there exists a point of maximal intersection $t_{max}$, $0<f(t_{max}) < 1$. Now, consider the set of all paths that are different from $T(1,n)$ and that have one endpoint on $f^{-1}(1)$. Take one path of this set which minimises the value of $f(t_{min})$ where $t_{min}$ is the point of intersection between the path and $T(1,n)$ and call that path $\pi_{min}$. If $f(t_{min}) < f(t_{max})$, the argument is very similar to the case above.
\vskip 1pt
So let us finally consider the case where $f(t_{max}) \leq f(t_{min})$ (Figure~\ref{morse}f). Denote the component of $\mathcal{G}-T(1,n)$ that contains $\pi_{min}$ ($\pi_{max}$) by $c_{min}$ ($c_{max}$). As before, since the graph intersects every meridian at least twice by assumption, there exists a point~$x \nsubset \pi$ in the graph so that $f(x) = f(t_{max})$. No path disjoint from $T(1,n)$ containing $x$ can connect to $c_{min}$ as this would contradict the maximality of $t_{max}$. Similarly, no path disjoint from $T(1,n)$ containing $x$ can connect to $c_{max}$ as this would contradict the minimality of $t_{min}$. Therefore, every path through $x$ connects to $T(1,n)$ before and after $t_{max}$ (and similarly before and after $t_{min}$) and is disjoint from $c_{max}$ and $c_{min}$. Take such a path and denote it by $\sigma$. Replacing the segment $T_{i}$ of $T(1,n)$ that runs between the endpoints of $\sigma$ with $\sigma$ gives us a torus knot $T'(1,n)$ (dotted in Figure~\ref{morse}f). Since $T'(1,n)$ is disjoint from $\pi_{max}$, there exists a path $\pi_{c}$ (fattened in Figure~\ref{morse}f) consisting of $\pi$ and $T_{i}$ that is disjoint from $T'(1,n)$ until some time after $t_{max}$. This contradicts the maximality of the pair we selected from $S$.
\vskip 7pt

\item 
\textbf{Deforming $T(1,n)$ to a longitude and finding a diagram~$D_{R'}$ of $\mathcal{G}$:}
\vskip 1pt
Recall that the spatial graph $\mathcal{G}$ contains no nontrivial knots or links by assumption. By putting $\mathcal{G}$ into a general position, there exists a meridian~$m$ of the torus that intersects $\mathcal{G}$ in exactly one point~$P$. The existence of $P$ is given by the previous step~(2). The point~$P$ lies on $T(1,n)$ since every meridian intersects $T(1,n)$. Now perform the following twist: By cutting the torus~$T$ along the meridian~$m$, then twisting it $n$-times around the core of $T$ and identifying the same points again afterwards, an ambient isotopy $i: \mathcal{G} \rightarrow \mathcal{G'}$ of the spatial graph is induced that maps $T(1,n)$ onto the longitude $l=T(1,0)$ of a new torus~$T'$ (not isotopic to $T$). We denote the image $i(\mathcal{G})$ on $T'$ by $\mathcal{G}'$. Restricted to the meridian~$m$ of $T$, the isotopy is the identity by construction. Therefore, $\mathcal{G}'$ and $m$ intersect in $l$ only. Define $Z' := T' \setminus (m \setminus P)$. 
\vskip 1pt
\noindent
Let us furthermore consider the diagram $D_{R'}$ of $\mathcal{G}'$ that we obtain the following way: We project $Z'$ onto a half-open rectangle $R'=(0,1) \times [0,1] \cup \{ (0,0) \} \cup \{ (1,0) \}$, where $P$ is projected on both corner points $\{ (0,0) \}$ and $\{ (1,0) \}$, such that $l$ is the bottom line of~$R'$ and take a generic position so that the top line of~$R'$ does not intersect the graph in vertices. As usual, we indicate the over- and under-crossings of $\mathcal{G}'$ (Figure~\ref{rectangleprojection}). Wlog we can assume that the diagram is regular, i.e., the diagram has only finitely many multiple points which all are transversal double points and no vertex is mapped onto a double point. Furthermore, let $D_{R'}$ be a reduced diagram, i.e., a diagram with the minimal number of crossings that can be achieved from projecting  $Z'$ onto a rectangle $R'$ as described. 
\begin{figure}[h]
	\centering
	\def\svgwidth{420pt}
	 \input{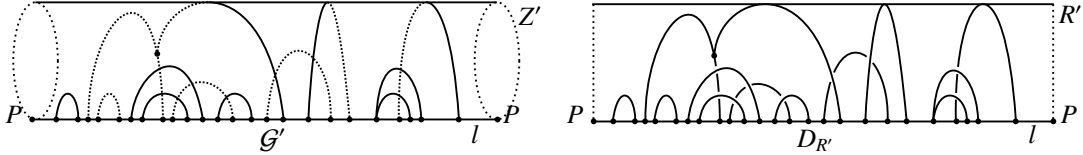}
\caption{The diagram $D_{R'}$ of $\G'$ obtained by projecting $Z'$ onto $R'$.}
\label{rectangleprojection}
\end{figure}


\item
\textbf{Showing that pairs of spatial fragments in a reduced diagram~$D_{R'}$ have no crossings if they are non-conflicting and only one type of crossings if they are conflicting:}
\vskip 1pt 
By Tutte's Theorem~\ref{Tutte}, any cycle of a planar graph has a bipartite conflict graph. As $G$ is planar by assumption, it follows that the conflict graph of $l$ in $G$ is bipartite. As the graph~$G$ is connected, all fragments of $G$ with respect to $l$ fall into two sets~$S_1$ and $S_2$ so that fragments which are elements of the same set do not conflict.
Choose an orientation of $l$. Starting at the point~$P$, enumerate along the orientation all vertices $v_1, \dots, v_k$ of $l$ that are endpoints of fragments of $G$ with respect to $l$. ($P$  might or might not be the element of a fragment's endpointset.) Denote the spatial fragments of $\mathcal{G}'$ by $f_{1}, \dots , f_{n}$ respecting the orientation of $l$ and so that $v_1 \in v(f_{1})$ . Assign to each fragment~$f_i$ of $G$ with respect to $l$ its endpointset $ v(f_i)=v_{l_i} \le \dots \le v_{r_i} \subset \{v_1, \dots, v_k\}$. We show that each pair of spatial fragments $f_i$ and $f_j$ has either no crossings or wlog  $f_i$ over-crosses $f_j$ at every crossing of the diagram $D_{R'}$:

\begin{enumerate}
\item
This is clear if $i=j$ since a fragment over-crosses (as well as under-crosses) itself at every self-crossing.

\item
If $i \neq j$ and $f_i$ and $f_j$ do not conflict, they have no crossings in a reduced diagram $D_{R'}$ since one of the cases in Remark~\ref{4cases} holds: It is clear that there are no crossings between $f_i$ and $f_j$ in the first case of Remark~\ref{4cases}. In the second case, let wlog $f_j$ be the inner fragment.  Then, $f_j$ lies entirely inside the cell of $R' \setminus f_{i}$ that has $[ v_{l_j}, v_{r_j} ]$ as part of the boundary. It follows from the connectivity of fragments that $f_i$ and $f_j$ have no crossings in a reduced diagram $D_{R'}$.

\item
If $i \neq j$ and $f_i$ and $f_j$ are conflicting, only crossings of one type can occur. Since the entire spatial graph $\mathcal{G}'$ is an embedding in $Z'$ (as well as in $T'$), it is not possible that both crossing types between $f_i$ and $f_j$ occur (Figure~\ref {disc}).
\vskip 2pt
We remark that if $i \neq j$ and $f_i$ and $f_j$ are conflicting, they have at least one crossing in $D_{R'}$: Without affecting $D_{R'}$, a fragment $\bar{f}$ can be added in $(0,1) \times [-1,0] \cup \{ (0,0) \} \cup \{ (1,0) \}$ such that $\bar{f}$ conflicts with both $f_i$ and $f_j$. Then, the subgraph consisting of $l$, $f_i$, $f_j$, and $\bar{f}$ is nonplanar by Tutte's Theorem~\ref{Tutte}. The fragment $\bar{f}$ does not lie in $D_{R'}$ by construction. Therefore, there exist neither crossings between $\bar{f}$ and $f_i$ nor between $\bar{f}$ and $f_j$. It follows from the planarity that a crossing between $f_i$ and $f_j$ must exists.

\end{enumerate}
\begin{figure}[h]
\begin{center}
		\includegraphics[scale=0.6]{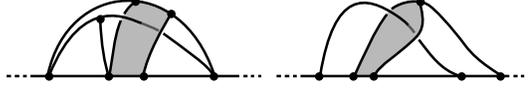}
\caption{A pair of conflicting spatial fragments in a reduced diagram $D_{R'}$ with both over- and under-crossings does not embed on the torus.}
\label{disc}
\end{center}
\end{figure}

\item
\textbf{Showing that a spatial fragment that conflicts with a pair of nested spatial fragments has the same crossing type with both of them:}
\vskip 1pt
If a fragment $f_i$ conflicts with two fragments $f_{j1}$ and $f_{j2}$, it follows from the bipartiteness of the conflict graph that $f_{j1}$ and $f_{j2}$ do not conflict. If $f_{j1}$ and $f_{j2}$ satisfy case~(1) of Remark~\ref{4cases}, it cannot be concluded whether $f_i$ over- or under-crosses $f_{j2}$ from knowing that $f_i$ over- or under-crosses $f_{j1}$. However, if $f_{j1}$ and $f_{j2}$ are nested as in the second case of Remark~\ref{4cases} with $f_{j1}$ being the inner fragment, and if $f_i$ wlog over-crosses $f_{j1}$ in $D_{R'}$, then $f_i$ also over-crosses $f_{j2}$. We see this with a contradiction (Figure~\ref{onetype}):  As in~(c) of step~(4) above, there exists an element $v_{ia} \in v(f_i)$ such that $v_{l_{j1}} < v_{ai}< v_{r_{j1}}$. Assume that $f_i$ over-crosses $f_{j1}$ in a non-empty set of points but under-crosses $f_{j2}$ in a non-empty set in $D_{R'}$. By the connectivity of fragments, there exists a path $p:[0,1] \rightarrow f_{i} \cup v_{ia} \cup v_{ib}$, with $v_{ib} \in v(f_i), a \neq b$ with endpoints $p(0)=v_{ia}$ and $p(1)=v_{ib}$, that over-crosses $f_{j1}$ in $p(t_{1})$ and under-crosses $f_{j2}$ in  $p(t_{2})$. To change from an over- to an under-crossing, a path in $D_{R'}$ has to intersect either the bottom or the top line of $R'$. As $p$ does only intersect the bottom line in $v_{ai}$ and $v_{ib}$, $p$ must have an intersection point $p(t)$ with the top line of $R'$, so that $t_{1} < t < t_{2}$. But as $f_{j1}$ is nested in $f_{j2}$, it follows that $v_{l_{j2}} < v_{ia}< v_{r_{j2}}$. Therefore, $p$ starting from $v_{ia}$ over-crosses $f_{j1}$ as well as under-crosses $f_{j2}$ before it can intersect the top line of $R'$. This is a contradiction.
\begin{figure}[h]
	\centering
	\def\svgwidth{150pt}
	 \input{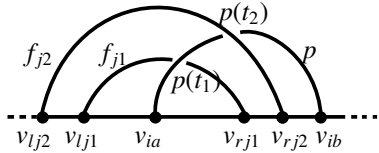}
\caption{It is only possible to have different crossing types between a fragment and two nested fragments with whom the first fragment conflicts if the graph is not realised on the torus.}
\label{onetype}
\end{figure}

\item
\textbf{Separating conflicting fragments to get a reduced diagram~$D_{R''}$ in which no pair of fragments has crossings:}
\vskip 1pt
The conclusions made in~(3) and~(4) allow rotations of the spatial fragments of $\mathcal{G}'$ around the longitude~$l$ in $\R^3$ which gives an ambient isotopy from $\mathcal{G}'$ to a realisation $\mathcal{G''}$ in which all fragments that are elements of $S_1$ lie on the torus~$T'$ and all elements of $S_2$ lie on a second torus~$\hat{T}$. The torus~$\hat{T}$ is glued to $T'$ in $l$ along a longitude.
\vskip 1pt
The rotations can be chosen as follows (compare Figure~\ref{rotation}): 
 Let $\mathcal{F}^{1}$ be the set of all fragments of $\G'$ with respect to $l$ (Figure~\ref{rotation}a). Order the fragments according to their first endpoint on $l$. If two fragments have identical first endpoints, the last endpoints of the fragments are compared and the fragment with bigger last endpoint is counted first. If the last endpoints coincide as well, the outermost fragment is counted first. If both fragments have only two endpoints, the order can be chosen arbitrarily.  If a step during the procedure which is described below cannot be performed, continue with the next step. (Figure~\ref{rotation},b shows the diagrammatic description of the starting situation.) 
\begin{enumerate}
\item
\begin{enumerate}
\item
Let $f_{k_{11}}$ be the first fragment that conflicts with a fragment $f_{i}, i<k_{11}$. Define $\mathcal{F}_{k_{11}}$ iteratively, starting with $\mathcal{F}_{k_{11}}=f_{k_{11}}$ as the set of all fragments that are nested or are in conflict with a fragment in $\mathcal{F}_{k_{11}}$.
Then rotate all spatial fragments that are elements of the set $\mathcal{F}_{k_{11}}$ rigidly in $\R^3$ by $\pi$ around the longitude~$l$. They are now embedded on $\hat{T}$. It is possible to choose the direction of the rotation so that no spatial fragments pass through each other since the spatial fragments in $\mathcal{F}_{k_{11}}$ have only over-crossings (respectively only under-crossings) with fragments $f_{i}, i<k_{1}$ by construction and by steps~(3) and~(4)  (Figure~\ref{rotation}c). 
\begin{enumerate}
\item
Let $f_{k_{12}}$ be the first fragment that is not an element of $\mathcal{F}_{k_{11}}$ but conflicts with a fragment $f_{i}, i<k_{11}$. Define $\mathcal{F}_{k_{12}}$ analogously to $\mathcal{F}_{k_{11}}$ (Figure~\ref{rotation}c). Since the elements of $\mathcal{F}_{k_{12}}$ are neither nested nor conflicting with any elements of $\mathcal{F}_{k_{11}}$, by the same argument as above, there is a rigid rotation of $\mathcal{F}_{k_{12}}$ in $\R^3$ by $\pi$ around the longitude~$l$ that does not pass the spatial graph through itself (Figure~\ref{rotation}d). 
\item
Continue this procedure for all remaining spatial fragments that conflict with a fragment $f_{i}, i<k_{11}$. Let $\mathcal{F}_{1} := \mathcal{F}_{k_{11}} \cup \mathcal{F}_{k_{12}} \cup \dots $. Then, $\mathcal{F}_{1}$ is embedded on $\hat{T}$ (Figure~\ref{rotation}d).
\end{enumerate}
\item
Let $f_{k_{21}} \notin \mathcal{F}_{1}$ be the first fragment that conflicts with a fragment $f_{i}, \; i < k_{21}$  ($i>k_{11}$ by construction). Define $\mathcal{F}_{k_{12}}$ analogously to the previous steps (Figure~\ref{rotation}c) and perform the rotation around $l$ (Figure~\ref{rotation}d). Continue this procedure for all remaining spatial fragments that conflict with a fragment $f_{i}, i<k_{21}$. Let $\mathcal{F}_{2}:=\mathcal{F}_{k_{21}} \cup \mathcal{F}_{k_{22}} \cup \dots  $. 
\item
Continue with this procedure until all fragments $f_{1} \dots f_{n}$ have been considered. The fragments that are elements of the set $\mathcal{F}^2 := \mathcal{F}_{1} \cup \mathcal{F}_{2} \cup \dots $ are now embedded on $\hat{T}$ (Figure~\ref{rotation}d).
\end{enumerate}
\item
Start (a) again beginning with the subgraph of $\G'$ that corresponds to $\mathcal{F}^2$. Note that during this step the rotations bring fragments back onto the torus~$T'$ but will not introduce crossings with $\G' - \mathcal{F}^2$ (Figure~\ref{rotation}e). 
\item
Continue the procedure has to be continued until all elements of $S_1$ lie on the torus~$T'$ and all elements of $S_2$ lie on the torus~$\hat{T}$ (Figure~\ref{rotation}f).
\end{enumerate}
\enlargethispage \baselineskip
This gives a realisation~$\G''$ of $G$ which is ambient isotopic to~$\G$. By~(3), a pair of spatial fragments of~$\G''$ has no crossings in a reduced diagram~$D_{R''}$ of~$\G''$ on a rectangle $R''=((0,1) \times [-1,1]) \cup \{ (0,0) \} \cup \{ (1,0) \} = ((l\setminus P) \times [-1,1]) \cup \{ P \} $. The diagram $D_{R''}$ is the composition of two diagrams defined as in~(2) for $T'$ on $((l\setminus P) \times [0,1]) \cup \{ P \} )$ and analogously for $\hat{T}$ on $((l\setminus P) \times [-1,0]) \cup \{ P \} $ (Figure~\ref{rotation}.g). 

\begin{figure}[h]
	\centering
	\def\svgwidth{460pt}
	 \input{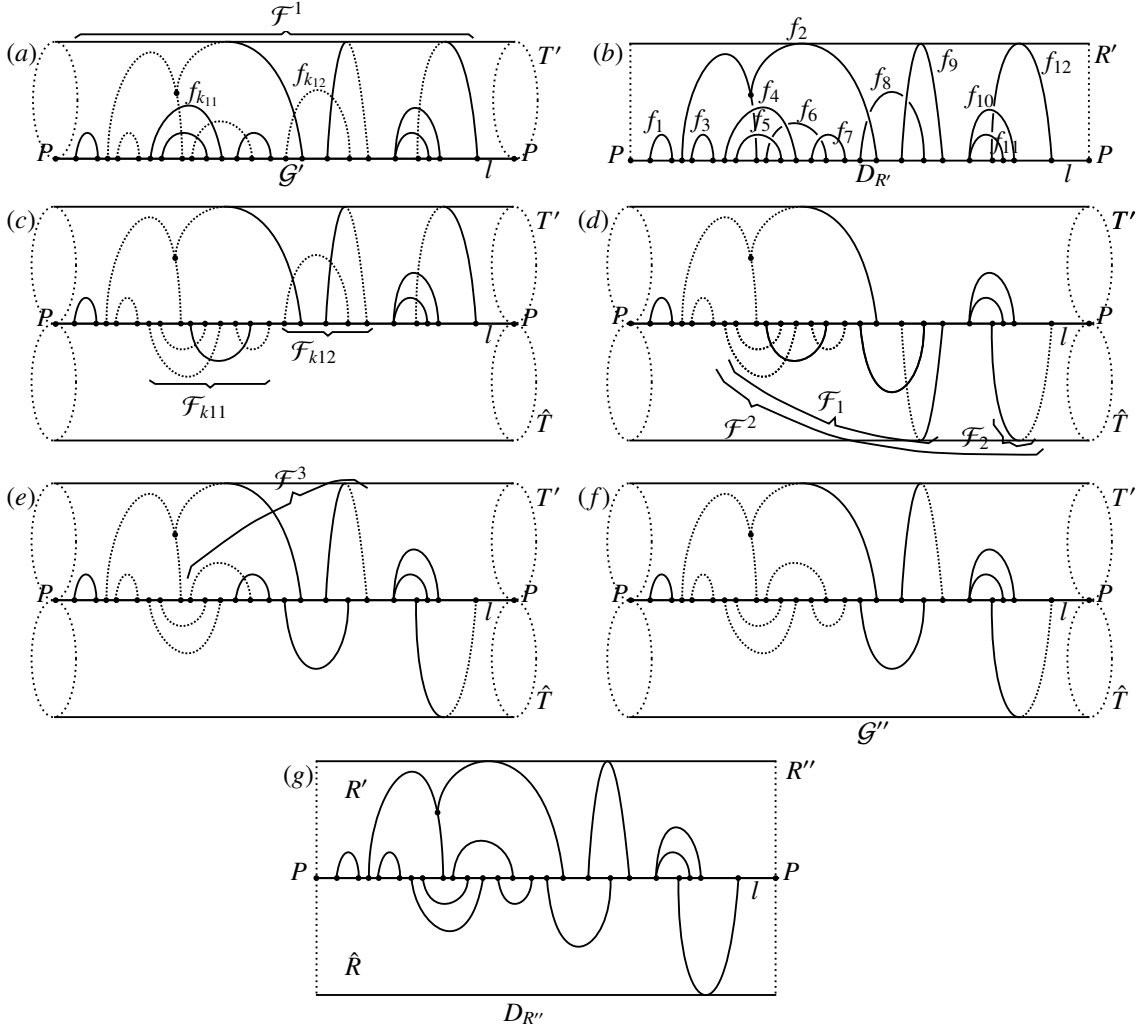}
\caption{The rotation of fragments described in~(6). For clarity of the figure, each fragment is chosen not to have crossings with itself.}
\label{rotation}
\end{figure}


\vskip 7pt
\item
\textbf{Showing that a single spatial fragment $f_i$ has no crossings in a reduced diagram~$D_{R''}$ of~$\mathcal{G}''$ (Fig.~\ref{1fragment}):}
\vskip 1pt

Each spatial subgraph $f_{i} \cup [v_{li}, v_{ri}]$ is embedded on a sphere~$S_{i}^{2}$. To see this, let wlog $f_i$ be embedded on $T'$ and take two meridians of $T'$ intersecting $l$ in $v_{li}$ and $v_{ri}$ such that the meridians do not intersect $\mathcal{G}''$ except in $v_{li}$ and $v_{ri}$. Then glue two meridional discs in, one in each meridian. $S_{i}^{2}$ consists of the two meridional discs and the part of $T'$ where $f_i$ is embedded in that lies between the meridians. We now want to ambient isotope $f_i$ inside the ball bounded by $S_{i}^{2}$ where we take the inside to be the component of $\R^3 \setminus S_{i}^{2}$ that does not intersect $\mathcal{G}''$. This isotopy will transform the diagram~$D_{R''}$ to a diagram in which the subdiagram corresponding to $f_i$ is crossing free.\\
Take  the subdiagram~$D^{i}_{R''}$ of the diagram~$D_{R''}$ that corresponds to $f_{i} \cup [v_{li}, v_{ri}]$. Perform all reducing Reidemeister~I \& II moves on it (Figure~\ref{1fragment}, a-b). Simplify $D^{i}_{R''}$ by isotopy whenever possible. We can assume that the diagram~$D^{i}_{R''}$ has edges crossing the top line of $R''$; otherwise it is crossing free and we are done. If one of those edges intersects the top line of $R''$ more than once or runs through a crossing in $D^{i}_{R''}$, we subdivide the edge by adding vertices (Figure~\ref{1fragment}, a-b). Therefore, we can assume that an edge of $D^{i}_{R''}$ that intersects the top line of~$R''$ intersects it only once and is crossing free in the diagram. (This does not affect our argument since if a subdivision of a spatial graph is trivial, the spatial graph itself is.) Denote the edges crossing the top line of $R''$ by  $\{ e_1 , \dots , e_{\tilde{k}} \}$. Each edge $e_j$ has two endpoints, $e_{ju}$ and $e_{jo}$ (fat in Figure~\ref{1fragment}b). By the connectivity of $\mathcal{G''}$, there is at least one edge~$e_j$ in $\{ e_1 , \dots , e_{\tilde{k}} \}$ for which there exists a path in $f_i$ from an endpoint of the edge wlog $e_{ju}$ to an element $v \in v(f_{i})$ that does not intersect the top line of $R''$. Denote such a path with endpoints $e_{ju}$ and $v$ by $p(e_{ju},v)$ (fat in Figure~\ref{1fragment}b). The set of all such paths is called $\{p(ju) \}$. The set $\{p(jo) \}$ is analogously defined for the endpoint~$e_{jo}$ of $e_j$.\\
If an edge $e_j \in \{ e_1 , \dots , e_{\tilde{k}} \}$ has an endpoint $e_{ju}$ or $e_{jo}$ which is not the endpoint of any path in $\{p(ju)\}$ or $\{p(jo)\}$ ($e_{3u}$ in Figure~\ref{1fragment}b), $e_j$ can be deformed not to intersect the top line of $R''$ by moving $e_j$ away from the top line while keeping $D_{R''} - e_{j}$ fixed. After this procedure, a subset of edges $\{ e_1 , \dots , e_{k}\} \subseteq \{ e_1 , \dots , e_{\tilde{k}} \}$ remains in which every edge~$e_j$ has endpoints $e_{ju}$ and $e_{jo}$ such that there exist at least two (possibly constant) paths $p(e_{ju},v_a)$ and $p(e_{jo},v_b)$ with $v_{a}, v_{b} \in v(f_i)$. Such a path $p(e_{ju},v_a)$ or $p(e_{jo},v_b)$ cannot have both over-crossings and under-crossings since in this case the path would intersect $\partial R''$ which it does not by construction. Also by construction, if a path in $\{p(jx)\}, x=u,o$ has an over-crossing (or respectively under-crossing), no path in $\{p(jx)\}$ has an under-crossing (respectively over-crossing). In addition, we can assume wlog  that there is no edge~$e_j \in \{ e_1 , \dots , e_{k}\}$ that has an endpoint $e_{ju}$ or $e_{jo}$ so that all paths $\{p(ju)\}$ or $\{p(jo)\}$ are crossing free ($e_{\tilde{k}u}$ in Figure~\ref{1fragment}b) as in this case we can deform $e_j$ away from the top line while keeping $D_{R''} - e_{j}$ fixed. Also, if $\{p(ju)\}$ and $\{p(jo)\}$ have only one type of crossings ($e_{\tilde{k}-2}$ in Figure~\ref{1fragment}b), we can again deform $e_j$ away from the top line while keeping $D_{R''} - e_{j}$ fixed. Therefore, every edge~$e_j$ has one endpoint $e_{ju}$ such that at least one path in $\{p(ju)\}$ has crossings which all are under-crossings and one endpoint $e_{jo}$ such that at least one path in $\{p(jo)\}$ has crossings which all are over-crossings. Furthermore, in $\{p(ju)\}$ (respectively $\{p(jo)\}$) are no paths that have over-crossings (respectively under-crossings) by definition of the paths.  \\
Assign to each edge $e_j \in \{ e_1 , \dots , e_{k} \}$ the set $w(e_{ju}) \subseteq \{ w_{j1}, \dots , w_{jl} \}$ (analogously $w(e_{jo}) \subseteq \{ w_{j1}, \dots , w_{jl} \}$) which is the set of points in $v(f_i)$ that are endpoints of at least one element in $\{p(ju)\}$ (respectively $\{p(jo)\}$). The union of the two sets is the endpointset of $e_j$ denoted by $w(e_j)=\{ w_{j1}, \dots , w_{jl} \} := w(e_{ju}) \cup w(e_{jo})$. The sets in the example in Figure~\ref{1fragment}c are $w(e_{ku}) =\{ w_{7}, w_{8}, w_{9} \}$ and $w(e_{ko})=\{ w_{1}, w_{2}, w_{7}, w_{10}, w_{11}, w_{12}  \}.$ Denote the union of $w(e_{1u}) \cup \dots \cup w(e_{ku})$ by $w(u)$ (and the union $w(e_{1o}) \cup \dots \cup w(e_{ko})$ by $w(o)$). In Figure~\ref{1fragment}c, these are $w(u)=\{w_{3}, w_{4}, w_{5}, w_{6}, w_{7}, w_{8}, w_{9}  \}$ and $w(o)= \{w_{1}, w_{2}, w_{7}, w_{10}, w_{11}, w_{12} \}$.\\
See that there exist no four points $w_{u1}, w_{u2} \in w(e_{ju})$ and $w_{o1}, w_{o2} \in w(e_{jo})$ that are interlaced as wlog  $w_{u1} < w_{o1} < w_{u2} < w_{o2}$: the cycle $(l - (w_{u1}, w_{o2})$, $p(e_{ju},w_{u1})$, $e_{j}, p(e_{jo},w_{o1})$, $[w_{o1}, w_{u2}]$, $[w_{u2}, w_{o2}])$ would have three pairwise conflicting fragments $[w_{u1}, w_{o1}]$, $p(e_{ju},w_{u2})$ and $p(e_{jo},w_{o2})$ (Figure~\ref{edgeends}). This contradicts the bipartiteness of $\mathcal{G}''$ which by Theorem~\ref{Tutte} contradicts its planarity. (Also, this graph forms $K_{3,3}$ where the points $w_{u1}, e_{jo}, w_{u2}, w_{o1}, e_{ju}, w_{o2}$ are the vertices.) Therefore, $w(e_{ju})$ and $w(e_{jo})$ can only be arranged like the endpointsets in Remark~\ref{4cases} and it is allowed to restrict to those cases as done below. By the connectivity of fragments, each element of $v(f_i)$ belongs to $w(u)$, $w(o)$ or to both. This gives a division of $[v_{li}, v_{ri}]$ into intervals, where a new interval starts at each point of $v(f_i)$ that is an element of both $w(u)$ and $w(o)$ or where a new interval starts in a point $v_{k+1}$ if $w(o) \notni v_{k} \in w(u)$ and  $w(u) \notni v_{k+1} \in w(o)$ (or if $u$ and $o$ exchanged). (In Figure~\ref{1fragment}d, the intervals are $[w_{1}, w_{3}], [w_{3}, w_{7}], [w_{7}, w_{10}], [w_{10}, w_{12}]$.)\\
\begin{figure}[h]
	\centering
	\def\svgwidth{150pt}
	 \input{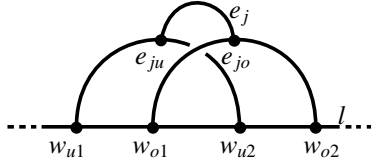}
\caption{The situation where points in which $\{p(ju)\}$ and $\{p(jo)\}$ are attached to the circle~$l$ are interlaced as drawn in the figure cannot occur.}
\label{edgeends}
\end{figure}
\vskip 1pt
The setting is now sufficiently well understood to eliminate all remaining crossings in two cases:
\vskip 1pt
\textbf{Case 1:} $w(e_{ju})$ and $w(e_{jo})$ are nested.\\
Assume that for an edge~$e_j$, $w(e_{ju})$ and $w(e_{jo})$ are nested with wlog  $w(e_{ju})$ being the inner points.
The inner points are all contained in one interval of the bottom line division since if they laid in two intervals, there would exist a point $w(e_{jo}) \notni w_{m} \in w(o)$ between two points of $w(e_{ju})$. Consequently, there would exist a path $p(e_{j'o},w_{m}), j \neq j'$ with endpoints $e_{j'o}$ and $w_{m}$ which does not intersect any of the paths that are elements of $w(e_{jo})$. This is not possible as $R'' \setminus (p(e_{j'o},w_{m}) \cup e_{j'})$ consists of two components of which both contain elements of $w(e_{ju})$ and there exists a path between any point of $w(e_{ju})$ and $e_{ju}$ by definition.\\
Define $\{e^{j}\}$ as the subset of edges $\{e^{j}\} \subseteq \{ e_1 , \dots , e_{k} \}$ so that all edges in $\{e^{j}\}$ have an endpoint in the interval $I$ of the bottom line that contains points of $w(e_{ju})$ (Figure~\ref{1fragment}d: $e^{1}=e^{2}=e^{3}=e^{4} \neq e^{5}=e^{6}$). Now consider the diagram $D_{R''} - \{e^{j}\}$ in which all edges that are elements of the set $\{e^{j}\}$ are deleted (Figure~\ref{1fragment}e). There exists a path $p^{j}:[0,1] \rightarrow D_{R''} - \{e^{j}\}$ from $p^{j}(0) \in w(e_{jo})$ to $p(1) \in w(e_{jo})$ such that $p^{j}(0) \leq w(e_{ju}) \leq p^{j}(1)$ and so that there exist two distinct points $p^{j}(t_{1}), p^{j}(t_{2})$ with $t_{1}, t_{2} \in [0,1]$ that have the following property: The diagram~$D_{R''} - \{e^{j}\} - p^{j}(t_{1}) - p^{j}(t_{2})$ splits such that the component~$C_{jo}$ containing $e_{jo}$ does not contain any points of $[v_{li}, v_{ri}]$ (Figure~\ref{1fragment}e). Furthermore, after performing a Whitney~2-flip on $C_{jo}$ (Figure~\ref{1fragment}, e-f), the edges of $\{e^{j}\}$ can be reintroduced to the diagram~$D_{R''} - \{e^{j}\}$ without introducing crossings (Figure~\ref{1fragment}g). A Whitney~2-flip replaces a component by its mirror image as shown in Figure~\ref{Whitneyflip}, left. This corresponds to a rotation in $\R^3$ by $\pi$ that would not pass the spatial graph~$\mathcal{G}''$ through itself - even if all edges~$\{e^{j}\}$ are left attached (Figure~\ref{Whitneyflip}, right and Figure~\ref{1fragment}, d-g). Therefore, we have an ambient isotopy that eliminates the crossings of $\{e^{j}\}$.\\
After continuing this procedure, all remaining edges of~$\{ e_1 , \dots , e_{k} \}$ have endpointsets so that for each edge all elements of $w(e_{ju})$ are smaller or equal than all elements of $w(e_{jo})$ (or all elements of $w(e_{ju})$ are greater or equal than all elements of $w(e_{jo})$).\\

\begin{figure}
\centering
		\includegraphics[scale=0.7]{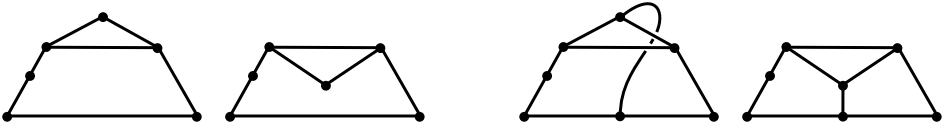}
\caption{Whitney 2-flip and corresponding rotation by $\pi$}
\label{Whitneyflip}
\end{figure}

See with a contradiction that it is always possible to find two points $p(t_{1})$ and  $p(t_{2})$ with the required property as follows. Assume that no two points $p(t_{1})$ and  $p(t_{2})$ with the required property exist. Then there exists a subgraph of $\G''$ that is nonplanar (Figure~\ref{pointexistence}): Wlog there exist two points $w(e_{ju}) \ni w_{1} < w_{2} \in w(e_{jo})$ such that there exists an edge $e_{w2}$ between $w_2$ and an inner point $p_{w2}$ of a path $p(e_{jo},w)$ with $w < w_{1}$. Choose $t_{s}$ as $ 0<t_{s}<1$ and such that $p(e_{jo},w)(t_{s})=p_{w2}$. This allows the description of a cycle with non-bipartite conflict graph (fat in Figure~\ref{pointexistence}), alternatively $K_{3,3}$ is detectable. The cycle with non-bipartite conflict graph consists of the following segments: $l-(w,w_{rj})$, $p|_{[0,s]}$, $e_{w2}$, $[w_{1},w_{2}]$, $p(w_{1},e_{ju})$, $e_{j}$, $p(e_{jo}, w_{rj})$ and the fragments are $(w,w_{1})$, $p|_{(s,1)}$ and $(w_{2}, w_{rj})$. This contradicts planarity by Theorem~\ref{Tutte}.

\begin{figure}[h]
	\centering
	\def\svgwidth{150pt}
	 \input{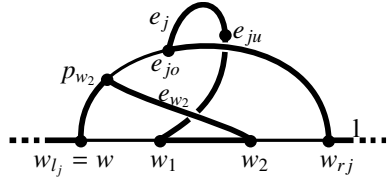}
\caption{Contradiction showing that $p(t_{1})$ and $p(t_{2})$ do exist.}
\label{pointexistence}
\end{figure}
\vskip 1pt

\textbf{Case 2:} All endpoints in $w(e_{ju})$ are smaller or equal to all endpoints in $w(e_{jo})$ (or respectively $w(e_{ju}) \geq w(e_{jo})$).\\
If $\{ e_1 , \dots , e_{k} \}$ is empty or has one element only, it follows that the diagram of $f_i$ has no crossings. So consider the case that  $k \geq 2$. Wlog, assume that all elements of $w(e_{ju})$ are smaller or equal to all elements of $w(e_{jo})$. If all elements of $w(e_{(j+1)u})$ are greater or equal to all elements of $w(e_{(j+1)o})$, it follows that $w(e_{jo}) = w(e_{(j+1)o})$ by construction and the connectivity of fragments (Figure~\ref{notnested}, left).\\
If all elements of $w(e_{(j+1)u})$ are smaller or equal to all elements of $w(e_{(j+1)o})$ but there exists an element~$w_{II} \in w(e_{(j+1)u})$ that is smaller than an element $w_{III} \in w(e_{jo})$, there are four elements $w_{I} < w_{II} < w_{III} < w_{IV}, w_{I} \in w(e_{ju}) , w_{IV} \in w(e_{(j+1)o})$. The paths $p(w_{I}, e_{ju}), e_{j}, p(w_{jo}, e_{III})$ and $p(w_{II}, e_{(j+1)u})$, $e_{j+1},$ $p(w_{jo}$ $, e_{IV})$ are connected via a path in $\mathring{R}''$ by the connectivity of fragments. The subgraph of $\mathcal{G}''$ (fat in Figure~\ref{notnested}, right) consisting of those three paths and $l$ is nonplanar (it is $K_{3,3}$) which can again be shown by an argument similar to the one given above in case~1 by choosing any Hamilton cycle of the subgraph (i.e., a cycle that runs through every vertex of the subgraph once) and seeing that its conflict graph is not bipartite. Therefore, it is shown in step~(7) that a reduced form of the diagram~$D_{R}''$ has no crossings. 
\begin{figure}[h]
	\centering
	\def\svgwidth{380pt}
	 \input{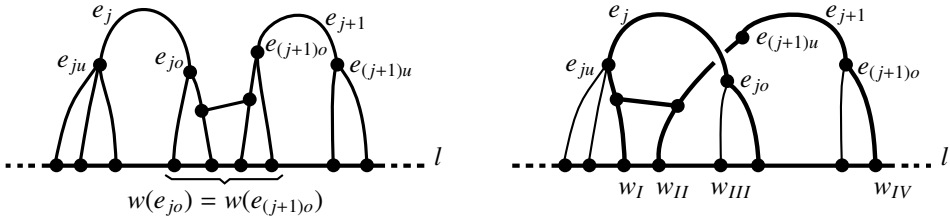}
\caption{Case 2: $w(e_{ju}) \leq w(e_{jo})$. The left figure is trivial. The right situation cannot occur since the graph is not abstractly planar. }
\label{notnested}
\end{figure}
\vskip 1pt
\end{enumerate}
Combining the seven steps now proves Theorem~\ref{goal}:
Step~(7) shows that a reduced form of the diagram~$D_{R}''$ has no crossings. It follows together with step~(6) that $\G''$ and therefore $\G'$ is trivial. The argument of step~(6) relies on step~(5) and step~(4). By step~(3), which can be performed because of step~(2), $\G$ is also trivial. This proves the theorem by step~(1).
\begin{figure}[h]
	\centering
	\def\svgwidth{500pt}
	 \input{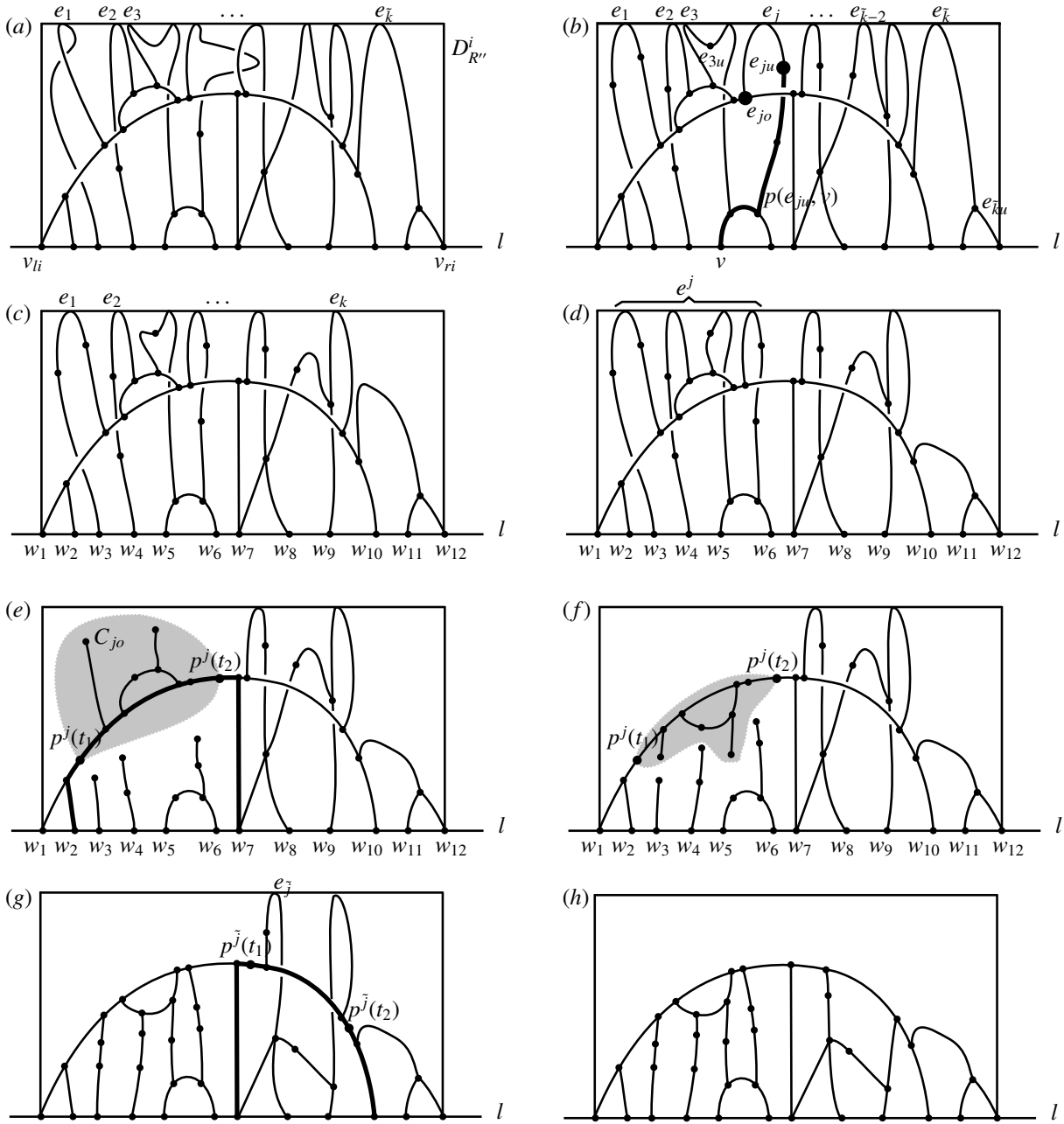}
\caption{Deformation of the diagram $D^{i}_{R''}$ of a spatial fragment~$f_i$ to a crossing free diagram.}
\label{1fragment}
\end{figure}
\end{proof}
\newpage
\begin{cor}[Ravels do not embed on the torus]
Every nontrivial embedding of $\theta_{n}$-graphs on the torus contains a nontrivial knot.
\end{cor}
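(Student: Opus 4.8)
The plan is to derive the corollary directly from Theorem~\ref{goal} by applying its contrapositive, so that essentially no work beyond two structural observations about the abstract graph $\theta_n$ is required. First I would record that $\theta_n$ is abstractly planar: its two vertices together with the $n$ parallel arcs joining them can be drawn in the plane as a nested fan, so an embedding of $\theta_n$ on $S^2$ exists and Theorem~\ref{goal} applies to every embedding of $\theta_n$ on $T^2$.

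Next I would observe that no subgraph of $\theta_n$ can be a link. Every cycle in $\theta_n$ is obtained by choosing two of the $n$ edges, and hence passes through \emph{both} of the two vertices of $\theta_n$. Consequently any two distinct cycles share these vertices and cannot be disjoint; since $\theta_n$ contains no pair of disjoint cycles, it contains no link at all, in particular no nonsplit link. This is the only place where the special shape of $\theta_n$, as opposed to that of a general abstractly planar graph, enters the argument, and it is exactly the cycle-structure remark already made in the introduction.

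With these two facts in hand the conclusion is immediate. Let $f$ be a nontrivial embedding of $\theta_n$ on $T^2$ with image~$\mathcal{G}$; here ``nontrivial'' means precisely that $f$ is nonplanar. By the contrapositive of Theorem~\ref{goal}, a nonplanar embedding on the torus of an abstractly planar graph must contain a subgraph that is a nontrivial knot or a nonsplit link. Since $\mathcal{G}$ contains no nonsplit link by the preceding paragraph, it must contain a nontrivial knot, which is exactly the assertion.

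I do not expect any genuine obstacle here, since the corollary is a clean specialisation of the main theorem. The only point needing verification is that $\theta_n$ admits no link subgraph, and this reduces to the elementary observation that every cycle of $\theta_n$ uses both of its vertices; everything else is supplied by Theorem~\ref{goal}.
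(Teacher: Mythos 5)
Your proposal is correct and follows exactly the paper's own argument: observe that every cycle of $\theta_n$ passes through both vertices, so no two cycles are disjoint and hence no nonsplit link can occur, then apply (the contrapositive of) Theorem~\ref{goal} using the abstract planarity of $\theta_n$. The paper's proof is a two-sentence version of precisely this reasoning, so there is nothing to add.
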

\begin{proof}
As there exist no pair of disjoint cycles in a $\theta_{n}$-graph, such a graph does not contain a nonsplit link. Since $\theta_{n}$-graphs are planar, the statement of the corollary follows directly from Theorem~\ref{goal}.
\end{proof}
\newpage
\subsection{Alternative proof of Theorem~\ref{goal}:}
The proof of Theorem~\ref{goal} given above can be differently finished using Theorem~\ref{Wu} by Wu~\cite{Wu}. This is a shortcut in the argument but does not give an explicit deformation.
\begin{thm}[Criterion for an embedding of a planar graph to be trivial~\cite{Wu}] \label{Wu} 
The embedding $\G$ of a planar graph $G$ is trivial if and only if every cycle in the spatial graph bounds an embedded disc $D$ whose interior $\mathring{D}$ is disjoint from $\G$.
\end{thm} 
\begin{proof} \textit{(alternative proof of Theorem~\ref{goal})}
Start with step~(1) and step~(2) of the proof that is given in the section above. Recall that the point $P$ is defined as follows: Except elementary cases, there exists a meridian of the torus on which the spatial graph $\mathcal{G}$ is embedded on such that the meridian intersects the spatial graph in only one point. Take this point to be $P$. To apply Theorem~\ref{Wu}, observe that every cycle in $\mathcal{G}$ bounds a disc $D$ which is embedded in $\R^3$ with interior $\mathring{D}$ disjoint from $\mathcal{G}$. This is clearly true for any meridian and for any cycle that does not intersect $P$ since $\mathcal{G} \setminus P$ is embedded on a sphere already. We are left to consider cycles $C$ that run through $P$ for which there exists a natural number $n$ so that the cycle has knot type $T(1,n)$. Let $C_{n}$ be one of these cycles, i.e., $C_{n}$ follows the longitude once and wraps $n$ times around the meridian. We can find an ambient isotopy $i_{n}$ of $\G$ that transforms $C_{n}$ to the longitude $l=i_{n}(C_{n})$ of a new torus $i_{n}(T)$ (not isotopic to $T$), by possibly performing another twist as described in step~(3) of the proof above. Denote the spatial graph that results from this twist by $i_{n}(\G)$. As a longitude bounds a disc internally disjoint from the torus, it follows that the cycle $i_{n}(C_{n})$ bounds a disc internally disjoint from $i_{n}(\G)$. Since ambient isotopies preserve embedded discs and do not pass them through the graph, it follows that the cycle $C_{n}$ in $\G$ bounds a disc internally disjoint from $\G$. For every $n \in \N$,  we can perform such an ambient isotopy of $\G$. This shows that every cycle in $\G$ bounds a disc which is internally disjoint from the spatial graph. As $G$ is planar by assumption, it follows from Theorem~\ref{Wu} that $\G$ is trivial. 
\end{proof}

\begin{rem}
It is not possible to weaken the assumptions of Theorem~\ref{goal} as shown by giving counter examples in~\cite{short}.
\end{rem}




\end{document}